\documentclass[letter, 11pt]{article}
\usepackage[T1]{fontenc}
\usepackage{tgpagella}
\usepackage{graphicx,amsmath,amsfonts,amscd,amssymb,bm,url,color,latexsym}
\usepackage{fullpage}
\usepackage[small,bf]{caption}
\usepackage{subcaption}
\usepackage{bbm}
\usepackage{microtype}
\setlength{\captionmargin}{30pt}
\usepackage{algorithm}
\usepackage{algorithmicx}
\usepackage{algpseudocode} 
\usepackage{verbatim}
\usepackage{framed}
\usepackage{comment}
\usepackage{tikz}
\usepackage{fge}
\usepackage{mathtools}
\usepackage{booktabs}
\usepackage{multirow}
\usepackage{multicol}
\usepackage{graphics}
\usepackage{epstopdf}
\usepackage{marvosym}
\usepackage{fancyhdr}
\usepackage{amsrefs}
\usepackage[headsep=6.0mm]{geometry}

\allowdisplaybreaks  

\usepackage{hyperref}
\hypersetup{
    colorlinks=true,%
    citecolor=blue,%
    filecolor=blue,%
    linkcolor=blue,%
    urlcolor=blue
}
\usepackage[toc, page]{appendix}
\usepackage{cleveref}

\setcounter{totalnumber}{50}
\setcounter{topnumber}{50}
\setcounter{bottomnumber}{50}
\newtheorem{theorem}{Theorem}[section]
\newtheorem{lemma}[theorem]{Lemma}
\newtheorem{corollary}[theorem]{Corollary}

\newtheorem{definition}[theorem]{Definition}

\newtheorem{remark}[theorem]{Remark}

\renewcommand{\mathbf}{\boldsymbol}



\numberwithin{equation}{section}

\def \endprf{\hfill {\vrule height6pt width6pt depth0pt}\medskip}
\newenvironment{proof}{\noindent {\bf Proof} }{\endprf\par}

\newcommand{\tensor}[1]{\bm{\mathcal{#1}}}

\pagestyle{fancy}
\setlength{\headheight}{41pt}
\title{Provable Low-Rank Tensor-Train Approximations in the Inverse of Large-Scale Structured Matrices}
\author{Chuanfu Xiao$^1$, Kejun Tang$^2$, Zhitao Zhu$^3$ \\
\vspace{2mm}
\texttt{\{chuanfuxiao, zhuzt\}@pku.edu.cn, $\textrm{\Letter}$tangkejun@suat-sz.edu.cn} \\
$^1$PKU-Changsha Institute of Computing and Digital Economy, Changsha, P.R. China\\
$^2$Faculty of Computility Microelectronics, \\Shenzhen University of Advanced Technology, Shenzhen, P.R. China\\
$^3$School of Mathematical Science, Peking University, Beijing, P.R. China
}

\begin{document}
\fancyhead{} 
\fancyhead[C]{\textbf{Provable Low-Rank Tensor-Train Approximations}}

\maketitle

\vspace{-0.3in}
\begin{abstract}
This paper studies the low-rank property of the inverse of a class of large-scale structured matrices in the tensor-train (TT) format, which is typically discretized from differential operators. 
An interesting question that we are concerned with is: Does the inverse of the large-scale structured matrix still admit the low-rank TT representation with guaranteed accuracy? In this paper, we provide a computationally verifiable sufficient condition such that the inverse matrix can be well approximated in a low-rank TT format. It not only answers what kind of structured matrix whose inverse has the low-rank TT representation but also motivates us to develop an efficient TT-based method to compute the inverse matrix. Furthermore, we prove that the inverse matrix indeed has the low-rank tensor format for a class of large-scale structured matrices induced by differential operators involved in several PDEs, such as the Poisson, Boltzmann, and Fokker-Planck equations. Thus, the proposed algorithm is suitable for solving these PDEs with massive degrees of freedom. Numerical results on the Poisson, Boltzmann, and Fokker-Planck equations validate the correctness of our theory and the advantages of our methodology.
\end{abstract}

\textbf{Keywords.} Large-scale structured matrix, Matrix inversion, Low-rank tensor-train format, Differential operator, Numerical PDEs


\textbf{Mathematics Subject Classification (2020).} 15A09, 15A69, 15B99, 65F55



\section{Introduction}\label{sec:intro}

Suppose we are given a large matrix $\bm{L}\in\mathbb{C}^{\prod\limits_{k=1}^dn_k\times \prod\limits_{k=1}^dn_k}$ with \emph{full rank}, and know that it has the following form
\begin{equation}\label{eq:coefficient matrix}
  \bm{L} = \sum\limits_{k=1}^d\bm{M}^{(1)}\otimes\cdots\otimes\bm{M}^{(k-1)}\otimes\bm{S}^{(k)}\otimes\bm{M}^{(k+1)}\otimes\cdots\otimes\bm{M}^{(d)},
\end{equation} 
where $\otimes$ denotes the Kronecker product, and $\bm{M}^{(k)},\ \bm{S}^{(k)}\in\mathbb{C}^{n_k\times n_k}$ are joint diagonalizable matrices for all $k=1,2,\ldots,d$, i.e., there exists two invertible matrices $\bm{U}_k$ and $\bm{V}_k$ such that 
$\bm{U}_k\bm{M}^{(k)}\bm{V}_k$ and $\bm{U}_k\bm{S}^{(k)}\bm{V}_k$
both are diagonal matrices. Our goal is to accurately and efficiently compute the inverse of $\bm{L}$, which is a fundamental task in the field of scientific computing and plays an important role in various applications \cite{golub2013matrix,kailath1980linear,kay2013fundamentals,murphy2012machine,benzi2002preconditioning}. From the perspective of matrices, it is challenging to directly compute the inverse of $\bm{L}$ for a large $d$ due to the curse of dimensionality. To this end, we resort to the perspective of tensor representations.
As pointed out in \cite{kazeev2012low}, matrices of the form \eqref{eq:coefficient matrix} can be represented as a tensor-train (TT) format with TT ranks $(1,2,\ldots,2,1)$.
The low-rank TT representation provides an excellent opportunity to achieve our goal. However, a crucial question for using low-rank TT representations is whether the inverse of $\bm{L}$ is indeed low-rank in the TT format. 

A \emph{provable} and \emph{scalable} solution to the above question is the theoretical foundation for iterative tensor-based matrix inversion algorithms. In terms of algorithms, Oseledets and Dolgov proposed as early as 2012 to combine the low-rank TT representation of $\bm{L}$ and use iterative methods such as density matrix renormalization group (DMRG) and Newton's iteration to compute its inverse \cite{oseledets2012solution}. Although such an iterative algorithm based on the low-rank TT format is efficient and widely used, there is a gap between the theory and the algorithm. That is, can the inverse of $\bm{L}$ also be represented by the low-rank TT format with guaranteed accuracy? It is the key to ensure that TT-based methods can successfully provide the inverse matrix $\bm{L}^{-1}$ with guaranteed accuracy at low costs. In this work, we conduct an in-depth analysis of whether the inverse matrix $\bm{L}^{-1}$ has a low-rank structure under the TT format. More precisely, we will answer the following two questions: (1) Does the inverse matrix $\bm{L}^{-1}$ still have a low-rank TT representation with guaranteed accuracy? (2) If so, can we design a more efficient algorithm to compute the inverse matrix $\bm{L}^{-1}$? 

\paragraph{Applications.} The large-scale matrices defined in \eqref{eq:coefficient matrix} are typically derived from PDE operators discretized by the finite difference method (FDM) \cite{smith1985numerical,strikwerda2004finite} or the finite element method (FEM) \cite{zienkiewicz2005finite,reddy2019introduction}.
For example, considering the $d$-dimensional Poisson equation defined in a hypercube with the Dirichlet boundary condition, matrices constructed on tensor-product uniform meshes by FDM and FEM are
\begin{equation*}
    \sum\limits_{k=1}^d\bm{I}\otimes\cdots\otimes\bm{I}\otimes\bm{\Delta}\otimes\bm{I}\otimes\cdots\otimes\bm{I}\quad \text{and}\quad \sum\limits_{k=1}^d\bm{M}^{(1)}\otimes\cdots\otimes\bm{M}^{(k-1)}\otimes\bm{\Delta}\otimes\bm{M}^{(k+1)}\otimes\cdots\otimes\bm{M}^{(d)},
\end{equation*}
where $\bm{I}$ is the $n\times n$ identity matrix, and 
\begin{equation*}
    \bm{\Delta} = \frac{1}{h^2}\left[\begin{array}{ccccc}
       2  & -1 &  &  &  \\
        -1 & 2 & \ddots &  &  \\
         & \ddots & \ddots & -1 &  \\
         &  & -1 & 2 & -1 \\
         &  &  & -1 & 2 \\
    \end{array}\right]\quad \text{and}\quad \bm{M}^{(k)} = \left[\begin{array}{ccccc}
       4  & 1 &  &  &  \\
        1 & 4 & \ddots &  &  \\
         & \ddots & \ddots & 1 &  \\
         &  & 1 & 4 & 1 \\
         &  &  & 1 & 4 \\
    \end{array}\right]
\end{equation*}
represent the one-dimensional negative discretized Laplace operator and mass matrix, respectively. Besides, the discretized differential operators involved in other PDEs such as the Boltzmann and Fokker-Planck equations, also have the form \eqref{eq:coefficient matrix} (see \Cref{sec:application} for more details).

\paragraph{Related Work.} In the past few decades, there have been some valuable works studying the low-rank property of the inverse of $\bm{L}$ discretized from PDEs in different tensor formats. For example, Kazeev and Khoromskij considered the one-dimensional discretized Laplace operator and explicitly gave the low-rank quantized TT (QTT) representation of its inverse \cite{kazeev2012low}, but it cannot be generalized to the $d$-dimensional case ($d > 1$). Oseledets estimated the TT rank of the inverse of the two-dimensional discretized Laplace operator is bounded by $\mathcal{O}(\log n)$ through extensive numerical experiments \cite{oseledets2010approximation}. For more general discretized elliptic operators, many researchers have analyzed the low-rank structure of their inverse in CANDECOMP/PARAFAC (CP) format, see \cite{beylkin2002numerical,bebendorf2003existence,grasedyck2004existence,gavrilyuk2005hierarchical,hackbusch2006low,hackbusch2007tensor,hackbusch2008tensor,khoromskij2009tensor,kazeev2012low}.
Specifically, they constructed a low-rank CP approximation of the inverse of the discretized elliptic operator via a separable approximation of the kernel function corresponding to the elliptic operator. However, giving an approximate separable expansion of the kernel function is computationally expensive\cite{hackbusch2008tensor,khoromskij2009tensor}, and only is suitable for elliptic operators. In addition, Dahmen \emph{et al.} also considered a pioneering method for constructing the low-rank CP approximation of the inverse of elliptic operators by its eigendecomposition and then approximating the Hadamard inverse of eigenvalues \cite{dahmen2016tensor}.

\subsection{Summary of Contributions}

In this paper, we prove that the inverse of matrices with structure \eqref{eq:coefficient matrix} can be well approximated under a broad condition in the low-rank TT format, and further derive bounds on its TT rank for a predetermined accuracy requirement. More importantly, the given condition to guarantee the low-rank TT format of the inverse matrix is computationally verifiable.
Furthermore, we also prove that matrices discretized from several PDEs, including the Poisson, Boltzmann, and Fokker-Planck equations not only have the form \eqref{eq:coefficient matrix} but also satisfy the given sufficient condition. 
To efficiently compute the inverse of matrices of the form \eqref{eq:coefficient matrix}, we develop a novel TT-based matrix inversion method that converts the matrix inversion operation into the Hadamard inverse of the corresponding tensor in TT format through diagonalization. Numerical experiments validate the correctness of our theory and demonstrate the advantages of the TT-based matrix inversion method for solving the Poisson, Boltzmann, and Fokker-Planck equations.

Compared with literature \cite{dahmen2016tensor}, there are two key differences in our work. First, we provide a condition that can be \emph{computationally verified} by the structured matrix itself to determine whether its inverse enjoys a low-rank TT structure. Second, the theory we established is not limited to discretized elliptic operators but also applies to matrices discretized from advection equations, such as the Boltzmann-BGK equation.

\subsection{Organization}
The rest of this paper is organized as follows. In \Cref{sec:pre}, we briefly introduce the TT format and some operations on TT formats required in this paper. \Cref{sec:algorithm} presents the TT-based matrix inversion method for computing the inverse of matrices of the form \eqref{eq:coefficient matrix}.  In \Cref{sec:analysis}, we give a sufficient condition such that the inverse matrix can be well approximated in a low-rank TT format, and then illustrate that it is computationally verifiable. \Cref{sec:application} shows that matrices discretized from the Poisson, Boltzmann, and Fokker-Planck equations not only have the form \eqref{eq:coefficient matrix}, but also satisfy the condition given in \Cref{sec:analysis}, which means that the proposed TT-based matrix inversion method can be directly used to solve these PDEs, and the corresponding numerical results are reported in \Cref{sec:example}. This paper is concluded in \Cref{sec:conclusion}.

\section{Preliminarlies}\label{sec:pre}

In this paper, we use the bold lowercase letter to represent a vector, e.g., $\bm{a}\in\mathbb{C}^{n}$, the bold uppercase letter to represent a matrix, e.g., $\bm{A}\in\mathbb{C}^{m\times n}$, and the bold script letter to represent a $d$th-order tensor, e.g., $\tensor{A}\in\mathbb{C}^{n_1\times n_2\times\cdots\times n_d}$. For convenience, other symbols and notations used in this paper are summarized in \Cref{table:symbol}.

\begin{table}[!htb]
    \centering
    \caption{Symbols and notations used in this paper.}
    \begin{tabular}{c|c}
    \toprule
        Symbol & Description \\
        \midrule
        
       $\tensor{A}({i_1,i_2,\ldots,i_d})$ & $(i_1,i_2,\ldots,i_d)$th element of $\tensor{A}$ \\
       $\bm{A}_{(\mathcal{I}_k)}$ & Matricization of $\tensor{A}$ along the index set $\mathcal{I}_k=\{1,\ldots,k\}$\\
        $\|\cdot\|_F$ & Frobenius norm\\
        $\times_k$ & Mode-$k$ product of a tensor and a matrix \\
        $\otimes$ & Kronecker product\\
        $\odot$ & Hadamard product \\
        $\bm{I}_{n}$ & An $n\times n$ identity matrix\\
        $\kappa(\bm{A})$ & Condition number of $\bm{A}$\\
        $\lceil\cdot\rceil$ & Ceiling function\\
        \bottomrule
    \end{tabular}
    \label{table:symbol}
\end{table}

\subsection{Basic tensor operations}

Let $\tensor{A}\in\mathbb{C}^{n_1\times n_2\times \cdots \times n_d}$ be a complex $d$th-order tensor, 
the conjugate of $\tensor{A}$ is denoted as $\overline{\tensor{A}}$ with entries $\overline{\tensor{A}}({i_1,i_2,\ldots,i_d})=\overline{\tensor{A}({i_1,i_2,\ldots,i_d})}$. The Frobenius norm of $\tensor{A}$ is defined as 
\begin{equation*}
    \|\tensor{A}\|_{F} = \sqrt{\sum\limits_{i_1,i_2,\ldots,i_d}|\tensor{A}({i_1,i_2,\ldots,i_d})|^2},
\end{equation*}
and it is easy to know that the Frobenius norm of $\overline{\tensor{A}}$ is equivalent to $\tensor{A}$, i.e., $\|\overline{\tensor{A}}\|_F = \|\tensor{A}\|_F$.

Matricization is an operation that rearranges a tensor into a matrix according to a certain bijection. A typical mapping for matricization is as follows. Let $\mathcal{I}_k=\{1,\ldots,k\}$ ($k\in\{1, 2, \ldots, d-1\}$) be an index set, then $\bm{A}_{\left(\mathcal{I}_k\right)}\in\mathbb{C}^{\sum\limits_{s=1}^kn_s\times\sum\limits_{s=k+1}^dn_s}$ is the matricization of $\tensor{A}$ along the index set $\mathcal{I}_k$, and satisfies the following mapping relationship:
\begin{equation*}
    \bm{A}_{\left(\mathcal{I}_k\right)}({i,j}) = \tensor{A}({i_1,i_2,\ldots,i_d})\quad \text{with}\quad\left\{\begin{array}{c}
         i =  i_k + (i_{k-1}-1)n_k + \cdots + (i_1-1)n_2\cdots n_k \\
        j = i_d + (i_{d-1}-1)n_d + \cdots + (i_{k+1}-1)n_{k+2}\cdots n_d
    \end{array}\right..
\end{equation*}

The mode-$k$ product of $\tensor{A}$ and a matrix $\bm{U}\in\mathbb{C}^{m_k\times n_k}$ is also a $d$th-order tensor, which is denoted as $\tensor{B} = \tensor{A}\times_{k}\bm{U}\in\mathbb{C}^{n_1\times\cdots\times n_{k-1}\times m_{k}\times n_{k+1}\times\cdots\times n_d}$. Elementwisely, we have
\begin{equation*}
    \tensor{B}({i_1,\ldots,j,\ldots,i_d}) = \sum\limits_{i_k=1}^{n_k}\tensor{A}({i_1,\ldots,i_k,\ldots,i_d})\bm{U}({j,i_k}).
\end{equation*}
Particularly, the product of $\tensor{A}$ and a series of matrices $\left\{\bm{U}^{(k_l)}\in\mathbb{C}^{m_{k_l}\times n_{k_l}}:l=1,\ldots,L\right\}$ along the corresponding modes is called tensor-times-matrix chain (TTMc), and denoted as 
\[
\tensor{A}\times_{k_1}\bm{U}^{(k_1)}\times_{k_2}\bm{U}^{(k_2)}\cdots\times_{k_L}\bm{U}^{(k_L)}.
\]


Let $\tensor{B}\in\mathbb{C}^{m_1\times m_2\times \cdots\times m_d}$ be a $d$th-order tensor, the Kronecker product of $\tensor{A}$ and $\tensor{B}$ yields a $d$th-order tensor $\tensor{C}\in\mathbb{C}^{n_1m_1\times n_2m_2\times\cdots\times n_dm_d}$ with entries 
\begin{equation*}
    \tensor{C}({s_1,s_2,\ldots,s_d}) = \tensor{A}({i_1,i_2,\ldots,i_d})\tensor{B}({j_1,j_2,\ldots,j_d}),
\end{equation*}
where $s_k = i_k + (j_k-1)n_k$ for all $k=1,2,\ldots,d$.

The Hadamard product of $\tensor{A}$ and $\tensor{B}$ requires that $\tensor{B}$ be of the same size as $\tensor{A}$, which also yields a $d$th-order tensor $\tensor{C}$ with entries 
\begin{equation*}
    \tensor{C}({i_1,i_2,\ldots,i_d}) = \tensor{A}({i_1,i_2,\ldots,i_d})\tensor{B}({i_1,i_2,\ldots,i_d}).
\end{equation*}
For convenience, the Hadamard product of $\tensor{A}$ and itself is denoted as $\tensor{A}^{\odot2}$. The Hadamard inverse of $\tensor{A}$ is denoted as $\tensor{A}^{\odot-1}$ with entries
\begin{equation*}
    \tensor{A}^{\odot-1}({i_1,i_2,\ldots,i_d}) = \frac{1}{\tensor{A}({i_1,i_2,\ldots,i_d})}.
\end{equation*}

\subsection{TT decomposition}

The TT decomposition \cite{oseledets2011tensor} decomposes the $d$th-order tensor $\tensor{A}$ into $d$ core tensors with order no more than three, 
which is formally defined as 
\begin{equation}\label{eq:tt}
    \tensor{A}({i_1,i_2,\ldots,i_d}) = \tensor{A}^{(1)}({i_1,:})\tensor{A}^{(2)}({:,i_2,:})\cdots\tensor{A}^{(d)}({:,i_d}),
\end{equation}
where $\left\{\tensor{A}^{(k)}\in\mathbb{C}^{r_{k-1}\times n_k\times r_k}:k=1,2,\ldots,d\right\}$ are the TT cores of $\tensor{A}$, and $(r_0,r_1,\ldots,r_d)$ is called the TT ranks that satisfies $r_0=r_{d}=1$. 
It is easy to see that the TT representation is able to reduce the memory cost of $\tensor{A}$ from $\mathcal{O}\left(\prod\limits_{k=1}^dn_k\right)$ to $\mathcal{O}\left(\sum\limits_{k=1}^dn_kr_{k-1}r_k\right)$, which only grows linearly with the order $d$ when its TT ranks are small enough.

For convenience, if the $d$th-order tensor $\tensor{A}$ appears in the TT format \eqref{eq:tt}, then we call it the $d$th-order TT tensor. We borrow the notation defined in \cite{lee2014fundamental} and denote the $d$th-order TT tensor $\tensor{A}$ as
\begin{equation}\label{eq:tt-block}
    \tensor{A} = \tensor{A}^{(1)}\times^1\tensor{A}^{(2)}\times^1\cdots\times^1\tensor{A}^{(d)}.
\end{equation}

\subsection{Operations on TT formats}
Another important advantage of TT representation is that basic linear algebra operations can be implemented into corresponding operations on TT cores, allowing the potential to reduce time and memory costs. For example, the conjugate of $\tensor{A}$ can be easily implemented by sequentially taking the conjugate of TT cores, i.e., 
\begin{equation*}
\overline{\tensor{A}} = \overline{\tensor{A}^{(1)}}\times^1\overline{\tensor{A}^{(2)}}\times^1\cdots\times^1\overline{\tensor{A}^{(d)}}.
\end{equation*}
The mode-$k$ product of $\tensor{A}$ and $\bm{U}$ only requires the mode-$2$ product of the $k$th TT core $\tensor{A}^{(k)}$ and $\bm{U}$. For the Kronecker product, we only need to sequentially perform the Kronecker product operation on TT cores. The addition and Hadamard product of two $d$th-order TT tensors can be implemented by the concatenation and Kronecker product operations on their TT cores along the mode-1 and -3, respectively. More details about tensor operations on TT formats can be found in \cite{lee2014fundamental,cichocki2016tensor,cichocki2017tensor}.

We remark that, most basic linear algebra operations will cause the TT rank of tensors to increase. For example, the addition operation increases the TT rank to $2r$, and the Kroneker and  Hadamard products increase the TT rank to $r^2$. To suppress the growth of TT rank, the recompression operation is required for rank reduction. It is actually equivalent to finding the best low TT rank approximation of TT tensors and can be solved by the classical TT-rounding algorithm and its high-performance implementations \cite{oseledets2011tensor,daas2022parallel,al2022parallel,al2023randomized,shi2023parallel}.
\section{TT-based matrix inversion method}\label{sec:algorithm}

In this section, we introduce a novel TT-based method for computing the inverse of the structured matrix $\bm{L}\in\mathbb{C}^{\prod\limits_{k=1}^dn_k\times \prod\limits_{k=1}^dn_k}$ of the form \eqref{eq:coefficient matrix}. Specifically, we convert the matrix inversion operation into the Hadamard inverse of a corresponding $d$th-order TT tensor via diagonalization, and then use iterative methods to solve it. Compared with the approach in \cite{oseledets2012solution}, the operations involved in our proposed method are all based on $d$th-order TT tensors instead of $2d$th-order TT tensors, thus it has higher computational efficiency. More importantly, we can give an in-depth analysis of the low-rank structure of the inverse matrix $\bm{L}^{-1}$ in the TT format following our proposed method, which will be presented in \Cref{sec:analysis}.

\subsection{Diagonalization}\label{subsec:diag}
Since the structured matrix $\bm{L}$ is of the form \eqref{eq:coefficient matrix} with full rank, there are two series of invertible matrices 
$\left\{\bm{U}_{k}\in\mathbb{C}^{n_k\times n_k}:k=1,2,\ldots,d\right\}$ and $\left\{\bm{V}_{k}\in\mathbb{C}^{n_k\times n_k}:k=1,2,\ldots,d\right\}$ such that 
\begin{equation*}\label{eq:diag}
    \bm{U}_{k}\bm{S}^{(k)}\bm{V}_{k} = \bm{D}_k\quad \text{and}\quad \bm{U}_k\bm{M}^{(k)}\bm{V}_k = \bm{\Lambda}_{k},
\end{equation*}
where 
\[
\bm{D}_k=\text{diag}\left(\mu_{1}^{(k)},\mu^{(k)}_{2},\ldots,\mu_{n_k}^{(k)}\right)\quad \text{and}\quad \bm{\Lambda}_{k}=\text{diag}\left(\lambda_{1}^{(k)},\lambda^{(k)}_{2},\ldots,\lambda_{{n_k}}^{(k)}\right)
\]
are two diagonal matrices with non-zero entries.
With the invertible transformations $\bm{U}_1\otimes\bm{U}_2\otimes\cdots\otimes\bm{U}_d$ and $\bm{V}_1\otimes\bm{V}_2\otimes\cdots\otimes\bm{V}_d$, the matrix $\bm{L}$ can be transformed into a diagonal matrix as follows
\begin{equation}\label{eq:diag_d}
\sum\limits_{k=1}^d\bm{\Lambda}_{1}\otimes\cdots\otimes\bm{\Lambda}_{k-1}\otimes\bm{D}_k\otimes\bm{\Lambda}_{k+1}\otimes\cdots\otimes\bm{\Lambda}_{d}.
\end{equation}
Let $\tensor{L}$ be the tensor form of $\bm{L}$ after diagonalization, which is a $d$th-order tensor in $\mathbb{C}^{n_1\times n_2\times\cdots\times n_d}$. Then computing the inverse matrix $\bm{L}^{-1}$ is equivalent to 
computing the Hadamard inverse of the corresponding $d$th-order tensor $\tensor{L}\in\mathbb{C}^{n_1\times n_2\times\cdots\times n_d}$ with entries
\begin{equation}\label{eq_TTtensor_after_diag}
\begin{split}
    \tensor{L}({j_1,j_2,\ldots,j_d}) & = \sum\limits_{k=1}^d\lambda_{j_1}^{(1)}\cdots\lambda^{(k-1)}_{j_{k-1}}\mu^{(k)}_{j_k}\lambda_{j_{k+1}}^{(k+1)}\cdots\lambda^{(d)}_{j_d},
    \end{split}
\end{equation}
that is, the inverse of the diagonal matrix in \eqref{eq:diag_d}. 

\subsection{Low-rank TT representation}
As referred in \cite{oseledets2011tensor,kazeev2012low}, we can easily give the rank-$(1,2,\ldots,2,1)$ TT representation of the tensor $\tensor{L}$ without any additional computational cost. Such a representation transforms a lower-order tensor (matrices are second-order tensors) to a higher-order one, which allows us to exploit the low-dimensional structures and reduce the computational cost. Although $\bm{L}$ is indeed full rank in the matrix sense, its tensor form has the low-rank property. This order promotion technique is crucial for our algorithm and analysis. Let $\bm{\mu}^{(k)},\ \bm{\lambda}^{(k)}\in\mathbb{C}^{n_k}$ be two row vectors consisting of $\left\{\mu_{1}^{(k)},\mu_{2}^{(k)},\ldots,\mu_{n_k}^{(k)}\right\}$ and $\left\{\lambda_{1}^{(k)},\lambda_{2}^{(k)},\ldots,\lambda_{n_k}^{(k)}\right\}$, the TT representation of $\tensor{L}$ is specifically expressed as follows:
\begin{equation}\label{eq:tt-coefficient}
    \tensor{L} = \tensor{L}^{(1)}\times^1\tensor{L}^{(2)}\times^1\cdots\times^{1}\tensor{L}^{(d)},
\end{equation}
where the TT cores satisfy
\begin{equation*}
\tensor{L}^{(1)}\in\mathbb{C}^{1\times n_1\times 2}:=\left\{\begin{array}{c}
    \tensor{L}^{(1)}({:,:,1})=\bm{\lambda}^{(1)}  \\
    \tensor{L}^{(1)}({:,:,2}) = \bm{\mu}^{(1)} 
\end{array}\right.,\quad \tensor{L}^{(d)}\in\mathbb{C}^{2\times n_d\times 1}:=\left\{\begin{array}{c}
    \tensor{L}^{(d)}({1,:,:})=\bm{\mu}^{(d)}  \\
    \tensor{L}^{(d)}({2,:,:}) = \bm{\lambda}^{(d)}
\end{array}\right.,
\end{equation*}
and
\begin{equation*}
\tensor{L}^{(k)}\in\mathbb{C}^{2\times n_k\times 2}:=\left\{\begin{array}{c}
    \tensor{L}^{(k)}({1,:,1})=\bm{\lambda}^{(k)}  \\
    \tensor{L}^{(k)}({2,:,1}) = \bm{0} \\
    \tensor{L}^{(k)}({1,:,2}) = \bm{\mu}^{(k)} \\
    \tensor{L}^{(k)}({2,:,2}) = \bm{\lambda}^{(k)} \\
\end{array}\right.
\end{equation*}
for all $k=2,\ldots,d-1$.


\subsection{Hadamard inverse of TT tensors}
Now the matrix inversion problem turns into how to compute the Hadamard inverse of $\tensor{L}$ since it is a tensor representation of the diagonalization of $\bm{L}$. Directly computing the Hadamard inverse of $\tensor{L}$ requires $\mathcal{O}\left(\prod\limits_{k=1}^dn_k\right)$ time and memory costs, which grows exponentially with the dimension $d$, and still suffers from the curse of dimensionality. To remedy this issue, we consider the following equivalent nonlinear equation:
\begin{equation}\label{eq:hadamardinv_eq}
    \tensor{X}^{\odot-1} = \tensor{L},
\end{equation}
where $\tensor{X}$ denotes the Hadamard inverse of $\tensor{L}$.
With the low-rank TT representation of $\tensor{L}$, \cite{espig2020iterative} pointed out that the nonlinear equation \eqref{eq:hadamardinv_eq} can be efficiently solved by Newton's iteration for the case where the tensor is real. We now show that the method presented in \cite{espig2020iterative} is also suitable for complex tensors.
For the complex TT tensor $\tensor{L}$, we can write $\tensor{L}$ as the sum of its real and imaginary parts: $\tensor{L}=\tensor{L}_1+i\tensor{L}_2$, where
\begin{equation}\label{eq:real_img}
    \tensor{L}_1 = \frac{\tensor{L} + \overline{\tensor{L}}}{2}\quad \text{and}\quad \tensor{L}_2 = \frac{\tensor{L} - \overline{\tensor{L}}}{2i}.
\end{equation}
Since the Hadamard inversion is an element-wise operator, the Hadamard inverse of $\tensor{L}$ is given by 
\begin{equation}\label{eq:hinverse}
    \tensor{L}^{\odot-1} = \left(\tensor{L}_1^{\odot2}+\tensor{L}_2^{\odot2}\right)^{\odot-1}\odot(\tensor{L}_1-i\tensor{L}_2),
\end{equation}
where $\tensor{L}_1^{\odot2}+\tensor{L}_2^{\odot2}$ is a real $d$th-order TT tensor with TT ranks at most $(1,32,\ldots,32,1)$.
There are two choices to obtain the Hadamard inverse of the complex tensor $\tensor{L}$. The first is to use Newton's iteration to compute the Hadamard inverse of the TT tensor $\tensor{L}_1^{\odot2} + \tensor{L}_2^{\odot2}$. Then the low-rank TT approximation of $\tensor{L}^{\odot-1}$ can be constructed  by \eqref{eq:real_img} and \eqref{eq:hinverse}. Obviously, we need to explicitly compute $\left(\tensor{L}_1^{\odot2}+\tensor{L}_2^{\odot2}\right)^{\odot-1}$ if we adopt such a choice. The second is also derived from Newton's iteration for computing the Hadamard inverse of $\tensor{L}_1^{\odot2}+\tensor{L}_2^{\odot2}$, its corresponding iterative process is as follows:
\begin{equation}\label{eq:newton}
    \tensor{X}\quad\leftarrow\quad\tensor{X} - \tensor{X}\odot\left(\left(\tensor{L}_1^{\odot2}+\tensor{L}_2^{\odot2}\right)\odot\tensor{X}-\tensor{E}\right),
\end{equation}
where $\tensor{E}\in\mathbb{R}^{n_1\times n_2\times \cdots\times n_d}$ is a $d$th-order tensor whose elements are all 1. It is easy to verify that $\tensor{E}$ can be represented by a rank-one TT tensor. By multiplying the left and right sides of \eqref{eq:newton} $\tensor{L}_1$ and $i\tensor{L}_2$ respectively, then we have
\begin{equation}\label{eq:newton-1}
\left\{\begin{array}{c}
      \tensor{X}\odot\tensor{L}_1\quad\leftarrow\quad\tensor{X}\odot\tensor{L}_1 - \tensor{X}\odot\tensor{L}_1\odot\left(\left(\tensor{L}_1^{\odot2}+\tensor{L}_2^{\odot2}\right)\odot\tensor{X}-\tensor{E}\right) \\
      \tensor{X}\odot i\tensor{L}_2\quad\leftarrow\quad \tensor{X}\odot i\tensor{L}_2 - \tensor{X}\odot i\tensor{L}_2\odot\left(\left(\tensor{L}_1^{\odot2}+\tensor{L}_2^{\odot2}\right)\odot\tensor{X}-\tensor{E}\right)
\end{array}\right..
\end{equation}
From \eqref{eq:newton-1}, we can further obtain 
\begin{equation}\label{eq:newton-2}
    \begin{array}{ccc}
        \tensor{X}\odot\left(\tensor{L}_1-i\tensor{L}_2\right) & \leftarrow &  \tensor{X}\odot\left(\tensor{L}_1 - i\tensor{L}_2\right) - \tensor{X}\odot\left(\tensor{L}_1-i\tensor{L}_2\right)\odot\left(\left(\tensor{L}_1^{\odot2}+\tensor{L}_2^{\odot2}\right)\odot\tensor{X}-\tensor{E}\right)\\
         & = &  \tensor{X}\odot\left(\tensor{L}_1 - i\tensor{L}_2\right) - \tensor{X}\odot\left(\tensor{L}_1-i\tensor{L}_2\right)\odot\left(\tensor{L}\odot\tensor{X}\odot\left(\tensor{L}_1-i\tensor{L}_2\right)-\tensor{E}\right). \\
    \end{array}
\end{equation}
This implies that Newton's iteration can be used directly to compute the Hadamard inverse of complex TT tensors, which is more efficient because it does not need to explicitly compute the tensor $\left(\tensor{L}_1^{\odot2}+\tensor{L}_2^{\odot2}\right)^{\odot-1}$. \Cref{algo:newton} describes the computational procedure of Newton's iteration for computing the Hadamard inverse of TT tensors.

\begin{algorithm}[H]
\caption{Newton's iteration for computing the Hadamard inverse of TT tensors.} 
	\label{algo:newton}
	\begin{algorithmic}[1]
		\Require
  Tensor $\bm{\mathcal{L}}\in\mathbb{C}^{n_1\times n_2\times\cdots\times n_d}$ with TT format, initial value $\tensor{X}_0$, and rounding error $\varepsilon$.
  
		\Ensure The Hadamard inverse of $\tensor{L}$: $\tensor{X}_{k}\in\mathbb{C}^{n_1\times n_2\times\cdots\times n_d}$.
  \State $k\quad \leftarrow\quad 0$
  \While{\text{not convergence}}
  \State Compute the residual: $\tensor{R}_{k}\quad\leftarrow\quad\tensor{L}\odot\tensor{X}_{k}-\tensor{E}$. 
  \State Update: $\tensor{X}_{k+1}\quad \leftarrow\quad \tensor{X}_{k} - \tensor{X}_{k}\odot\tensor{R}_{k}$.
  \State Truncate: $\tensor{X}_{k+1}\quad \leftarrow\quad \texttt{TT-rounding}(\tensor{X}_{k+1},\varepsilon)$.
  \State $k\quad \leftarrow\quad k+1$
  \EndWhile
	\end{algorithmic}
\end{algorithm}

\begin{remark}\label{rmk:hadamard-inverse}
Another way to compute the Hadamard inverse of $\tensor{L}$ is to use the gradient descent method to solve the corresponding linear system:
\begin{equation*}
    \tensor{L}\odot\tensor{X} = \tensor{E}.
\end{equation*}
Specifically, the iterative process is as follows,
\begin{equation*}
    \tensor{X}_{k+1}\quad \leftarrow\quad \tensor{X}_{k} - \alpha\tensor{L}\odot(\tensor{L}\odot\tensor{X}_k-\tensor{E}),
\end{equation*}
where $\alpha>0$ is the iteration step size. Although the gradient descent method is linear convergent, it avoids calculating the Hadamard product of $\tensor{X}_k$ and itself, i.e.,  $\tensor{X}_k\odot\tensor{X}_k$. Therefore, compared with \Cref{algo:newton}, the TT rank of the intermediate tensor $\tensor{X}_k$ is smaller during the iterations of gradient descent, which implies that less memory costs are required. Typically, we can use the gradient descent method to provide a good initial guess for Newton's iteration in practice.
\end{remark}

\subsection{Explicit TT approximation of the invese matrix}\label{sec:step4}
Once we have the Hadamard inverse of $\tensor{L}$ in hand, we can explicitly construct the inverse matrix $\bm{L}^{-1}$ with some additional steps. 
To achieve this, we first need an $\texttt{Expanding}$ operator is defined as follows:
\begin{equation}\label{eq:expand}
    \texttt{Expanding}:\tensor{X}\in\mathbb{C}^{n_1\times n_2\times\cdots\times n_d}\quad \rightarrow\quad \hat{\tensor{L}} = \left(\tensor{X}\otimes \tensor{E}\right)\odot\tensor{I}\in\mathbb{C}^{(n_1\times n_1)\times (n_2\times n_2)\times\cdots\times (n_d\times n_d)},
\end{equation}
where
\begin{equation*}
    \tensor{I} = \bm{I}_{n_1}\otimes\bm{I}_{n_2}\otimes\cdots\otimes\bm{I}_{n_d}\in\mathbb{R}^{(n_1\times n_1)\times (n_2\times n_2)\times\cdots\times (n_d\times n_d)}
\end{equation*}
is the TT representation of the $\prod\limits_{k=1}^dn_k\times\prod\limits_{k=1}^dn_k$ identity matrix. The \texttt{Expanding} operator provides an efficient way to convert the Hadamard inverse of $\tensor{L}$ to the inverse of $\bm{L}$ itself.
With the $\texttt{Expanding}$ operator, it is easy to obtain the TT approximation of $\bm{L}^{-1}$ by the TTMc operation of the tensor $\texttt{Expanding}(\tensor{L}^{\odot-1})$ and matrices $\left\{\bm{U}_k^{-1},\bm{V}_k^{-1}:k=1,2,\ldots,d\right\}$. More specifically, the inverse of $\bm{L}$ is given by 
\begin{equation}\label{eq_final_inverse}
    \begin{gathered}
          \bm{L}^{-1} = \texttt{Expanding}\left(\tensor{L}^{\odot-1}\right)\times_1\bm{U}_1^{-1}\times_2\bm{V}_{1}^{-1}\cdots\times_{2k-1}\bm{U}_{k}^{-1}\times_{2k}\bm{V}_{k}^{-1}\cdots\times_{2d-1}\bm{U}_d^{-1}\times_{2d}\bm{V}_d^{-1}.
    \end{gathered}
\end{equation}
Since both $\tensor{E}$ and $\tensor{I}$ are rank-one TT tensors, the defined $\texttt{Expanding}$ operator will not lead to an increase in TT rank. In other words, the TT rank of the tensor corresponding to the inverse matrix $\bm{L}^{-1}$ is equal to $\tensor{L}^{\odot-1}$.

\subsection{Overall computational procedure of the TT-based matrix inversion method}

Combining the above four steps, we can give the overall computational procedure of the proposed TT-based matrix inversion method, which is summarized in \Cref{algo:solver}.

\begin{algorithm}[H]
\caption{TT-based matrix inversion method.}
\label{algo:solver}
\begin{algorithmic}[1]
\Require 
A structured matrix $\bm{L}\in\mathbb{C}^{\prod\limits_{k=1}^dn_k\times\prod\limits_{k=1}^dn_k}$ of the form \eqref{eq:coefficient matrix}: matrices $\{\bm{S}^{(k)},\bm{M}^{(k)}\in\mathbb{C}^{n_k\times n_k}:k=1,2,\ldots,d\}$.
\Ensure Low-rank approximation of the inverse matrix $\bm{L}^{-1}$ in TT format. 
  \State Compute the joint diagonalization of $\bm{S}^{(k)}$ and $\bm{M}^{(k)}$, 
  \begin{equation*}
  \begin{split}
      \left\{\begin{array}{cc}
           \bm{D}_k\quad \leftarrow\quad \bm{U}_k\bm{S}^{(k)}\bm{V}_k,\\
           \bm{\Lambda}_k\quad \leftarrow\quad \bm{U}_k\bm{M}^{(k)}\bm{V}_k,
      \end{array}\right.\quad k=1,2,\ldots,d,
  \end{split}
  \end{equation*}
  and denote the vectors consisting of the diagonal entries of $\bm{D}_k$ and $\bm{\Lambda}_k$ as $\bm{\mu}^{(k)}$ and $\bm{\lambda}^{(k)}$.
  \State Construct the TT representation \eqref{eq:tt-coefficient} of the tensor $\tensor{L}$.
  \State Compute the Hadamard inverse of $\tensor{L}$ by \Cref{algo:newton}: 
  $$\tensor{X}\quad \leftarrow\quad \tensor{L}^{\odot-1}.$$
  \State Represent the low-rank approximation of $\bm{L}^{-1}$ in TT format by \eqref{eq_final_inverse}:
  \begin{equation*}
      \begin{gathered}
          \bm{X}\ \leftarrow\  \texttt{Expanding}\left(\tensor{X}\right)\times_1\bm{U}_1^{-1}\times_2\bm{V}_{1}^{-1}\cdots\times_{2k-1}\bm{U}_{k}^{-1}\times_{2k}\bm{V}_{k}^{-1}\cdots\times_{2d-1}\bm{U}_d^{-1}\times_{2d}\bm{V}_d^{-1}.
      \end{gathered}
  \end{equation*}
	\end{algorithmic}
\end{algorithm}

\begin{remark}
    Step 4 of \Cref{algo:solver} is redundant in solving the linear system $\bm{L}\bm{u} = \bm{f}$, which can be replaced by 
    \[
    \tensor{U}\quad\leftarrow\quad\left(\tensor{L}^{\odot-1}\odot\hat{\tensor{F}}\right)\times_{1}(\bm{V}_1\bm{U}_1^{-1})\times_2(\bm{V}_2\bm{U}_2^{-1})\cdots\times_{d}(\bm{V}_d\bm{U}_d^{-1}),
    \]
where $\tensor{U}$ and $\hat{\tensor{F}}$ are the tensor corresponding to 
the solution $\bm{u}$ and 
$\left(\bm{U}_1\otimes\bm{U}_2\otimes\cdots\otimes\bm{U}_d\right)\bm{f}$, respectively. 
\end{remark}

\begin{remark}\label{rmk:accuracy}
    {We remark that the accuracy of the inverse matrix obtained by \Cref{algo:solver} depends on the accuracy of computing the Hadamard inverse of $\tensor{L}$, i.e., step 3 of \Cref{algo:solver}.} Specifically, we have 
\begin{equation}\label{eq:error_analysis}
\begin{array}{rl}
     \frac{\|\bm{X} - \bm{L}^{-1}\|_F}{\|\bm{L}^{-1}\|_F}
        &  =\frac{\|\texttt{Expanding}\left(\tensor{X}-\tensor{L}^{\odot-1}\right)\times_1\bm{U}_1^{-1}\times_2\bm{V}_{1}^{-1}\cdots\times_{2k-1}\bm{U}_{k}^{-1}\times_{2k}\bm{V}_{k}^{-1}\cdots\times_{2d-1}\bm{U}_d^{-1}\times_{2d}\bm{V}_d^{-1}\|_F}{\|\texttt{Expanding}\left(\tensor{L}^{\odot-1}\right)\times_1\bm{U}_1^{-1}\times_2\bm{V}_{1}^{-1}\cdots\times_{2k-1}\bm{U}_{k}^{-1}\times_{2k}\bm{V}_{k}^{-1}\cdots\times_{2d-1}\bm{U}_d^{-1}\times_{2d}\bm{V}_d^{-1}\|_F}\\
    &\leq \prod\limits_{k=1}^d\kappa\left(\bm{U}_k\right)\kappa\left(\bm{V}_k\right)\frac{\|\tensor{X}-\tensor{L}^{\odot-1}\|_F}{\|\tensor{L}^{\odot-1}\|_F}=\prod\limits_{k=1}^d\kappa\left(\bm{U}_k\right)\kappa\left(\bm{V}_k\right)\frac{\|\tensor{L}^{\odot-1}\odot\left(\tensor{L}\odot\tensor{X}-\tensor{E}\right)\|_F}{\|\tensor{L}^{\odot-1}\odot\tensor{E}\|_F}\\
    & \leq\prod\limits_{k=1}^d\kappa\left(\bm{U}_k\right)\kappa\left(\bm{V}_k\right)\kappa\frac{\|\tensor{L}\odot\tensor{X}-\tensor{E}\|_F}{\|\tensor{E}\|_F},
\end{array}
\end{equation}
where $\tensor{X}$ is the low-rank TT approximation of $\tensor{L}^{\odot-1}$ obtained by \Cref{algo:newton}, and $\kappa=\frac{\max(|\tensor{L}|)}{\min(|\tensor{L}|)}$.
\end{remark}

\subsection{Complexity analysis}

To facilitate our analysis of the time and memory costs of \Cref{algo:solver}, we assume that $n_k\approx n$ for all $k=1,2,\ldots,d$.
In step 1, the time cost of diagonalization depends on the structure of $\bm{S}^{(k)}$ and $\bm{M}^{(k)}$. For a general $n\times n$ diagonalizable matrix, it requires $\mathcal{O}(n^3)$ time cost. However,
most matrices discretized from PDE operators have Toeplitz structure such as the Laplace operator, which can be diagonalized by the fast Fourier transform (FFT) \cite{golub2013matrix}, it only requires $\mathcal{O}(n^2\log n)$ time cost. Further, when using the QTT format to represent the matrix, the time cost of FFT can be reduced to $\mathcal{O}(\log^2 n)$ \cite{dolgov2012superfast,chen2023quantum}, and we leave it for future work. Additionally, the memory cost in step 1 is $\mathcal{O}(dn^2 + dn)$. 
Since the TT representation of $\tensor{L}$ can be explicitly constructed via concatenation \cite{oseledets2011tensor,kazeev2012low}, there is no computational cost in step 2, but only a memory cost of $\mathcal{O}(dn)$.
The key of \Cref{algo:solver} is to compute the Hadamard inverse of $\tensor{L}$, i.e., step 3. Here we assume that the initial tensor chosen in \Cref{algo:newton} is close enough to $\tensor{L}^{\odot-1}$, and let $(1,r_1,\ldots,r_{d-1},1)$ be the TT rank of $\tensor{L}^{\odot-1}$ and $r = \max\{r_1,r_2,\ldots,r_{d-1}\}$.
As can be seen from \Cref{algo:newton}, its main computational cost in each iteration comes from the Hadamard product operation on TT formats and TT-rounding, whose time costs are $\mathcal{O}(dnr^4)$ and $\mathcal{O}(dnr^6)$, respectively. Correspondingly, the memory cost required in step 3 is $\mathcal{O}(dnr^4)$.
In step 4, the main computation is the TTMc operation in TT formats, whose time cost is bounded by $\mathcal{O}(dn^3r)$, and the memory cost is $\mathcal{O}(dnr + dn^2)$.

In summary, the overall time cost of \Cref{algo:solver} is no more than $\mathcal{O}(dn^3+(dnr^4+dnr^6)\times num\_iter+dn^3r)$, where $num\_iter$ is the number of iterations of Newton iteration. And the memory cost of \Cref{algo:solver} is $\mathcal{O}(dn^2+dn+dnr^4+dnr)$. It is worth mentioning that when the TT rank of $\tensor{L}^{\odot-1}$ is small enough, the time and memory costs grow linearly with the dimension $d$, which means that \Cref{algo:solver} can be efficiently used to solve large-scale and high-dimensional problems. In the next section, we will give a rigorous analysis to demonstrate the low-rank property of $\tensor{L}^{\odot-1}$. 
\section{Low-rank analysis in the TT format}\label{sec:analysis}

In this section, we give a rigorous analysis of the low-rank structure of the inverse matrix $\bm{L}^{-1}$ in the TT format, which is the key to the efficiency and application of the proposed TT-based matrix inversion method. More specifically, we give a computationally verifiable sufficient condition such that $\bm{L}^{-1}$ can be well approximated in a low-rank TT format, and further provides an estimate of the upper bound of its TT rank.

It is known that the TT rank of the tensor corresponding to the inverse matrix $\bm{L}^{-1}$ is equal to $\tensor{L}^{-1}$ (see \Cref{sec:step4} for details), whose $(j_1,j_2,\ldots,j_d)$th element of $\tensor{L}^{\odot-1}$ can be written as
\begin{equation}\label{eq:elementwise}
    \tensor{L}^{\odot-1}({j_1,j_2,\ldots,j_d}) = \frac{1}{\sum\limits_{k=1}^{d}\lambda_{j_1}^{(1)}\cdots\lambda_{j_{k-1}}^{(k-1)}\mu_{j_k}^{(k)}\lambda_{j_{k+1}}^{(k+1)}\cdots\lambda_{j_d}^{(d)}},
\end{equation}
where $\left\{\mu_{j_k}^{(k)},\lambda_{j_k}^{(k)}: j_k = 1, 2, \ldots, n_k\right\}$ is the same as \Cref{subsec:diag}. Therefore, we can analyze the low-rank structure of $\bm{L}^{-1}$ in the TT format by $\tensor{L}^{\odot-1}$ instead of $\bm{L}^{-1}$ itself. 
Before that, we present the definition of tensors with displacement structures \cite{shi}.

\begin{definition}\label{def:displacement_tensor}
    (See \cite{shi}.) Let $\tensor{X}\in\mathbb{C}^{n_1\times n_2\times \cdots \times n_d}$ be a $d$th-order complex tensor. We say that it has $\left(\bm{A}^{(1)},\bm{A}^{(2)},\ldots,\bm{A}^{(d)}\right)$-displacement structure of $\tensor{F}\in\mathbb{C}^{n_1\times n_2\times\cdots\times n_d}$ if it satisfies the multidimensional Sylvester equation:
    \begin{equation}\label{eq:displace_tensor}
        \tensor{X}\times_1\bm{A}^{(1)}+\tensor{X}\times_{2}\bm{A}^{(2)}+\cdots+\tensor{X}\times_{d}\bm{A}^{(d)} = \tensor{F},
    \end{equation}
    where $\bm{A}^{(k)}$ is an $n_k\times n_k$ complex matrix for all $k = 1, 2, \ldots, d$.
\end{definition}

According to the explicit form of the $(j_1,j_2,\ldots,j_d)$th element of $\tensor{L}^{\odot-1}$ \eqref{eq:elementwise}, we prove that the tensor $\tensor{L}^{\odot-1}$ has a displacement structure of a rank-one TT tensor, as stated in \Cref{thm:displacement}.

\begin{theorem}\label{thm:displacement}
If the matrices $\left\{\bm{M}^{(k)}:k=1,2,\ldots,d\right\}$ are invertible, then $\tensor{L}^{\odot-1}$ has $\left(\bm{A}^{(1)},\bm{A}^{(2)},\ldots,\bm{A}^{(d)}\right)$-displacement structure of $\tensor{F}=\bm{f}^{(1)}\otimes\bm{f}^{(2)}\otimes\cdots\otimes\bm{f}^{(d)}$, where 
\begin{equation*}
    \bm{A}^{(k)} = \text{diag}\left(\frac{\mu_{1}^{(k)}}{\lambda_1^{(k)}},\frac{\mu_{2}^{(k)}}{\lambda_2^{(k)}},\ldots,\frac{\mu_{n_k}^{(k)}}{\lambda_{n_k}^{(k)}}\right)\quad \text{and}\quad \bm{f}^{(k)}=\left(\frac{1}{\lambda_{1}^{(k)}},\frac{1}{\lambda_{2}^{(k)}},\ldots,\frac{1}{\lambda_{n_k}^{(k)}}\right)
\end{equation*}
for all $k=1,2,\ldots,d$.
\end{theorem}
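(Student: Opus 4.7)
The plan is to verify the multidimensional Sylvester equation \eqref{eq:displace_tensor} entry by entry using the explicit formula \eqref{eq:elementwise}. Because each $\bm{A}^{(k)}$ is diagonal, the mode-$k$ product collapses to a pointwise rescaling:
$$\bigl(\tensor{L}^{\odot-1}\times_k \bm{A}^{(k)}\bigr)(j_1,\ldots,j_d)\;=\;\frac{\mu_{j_k}^{(k)}}{\lambda_{j_k}^{(k)}}\,\tensor{L}^{\odot-1}(j_1,\ldots,j_d),$$
so summing the $d$ terms on the left-hand side of \eqref{eq:displace_tensor} produces the single scalar multiple $\tensor{L}^{\odot-1}(j_1,\ldots,j_d)\sum_{k=1}^d \mu_{j_k}^{(k)}/\lambda_{j_k}^{(k)}$.

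Next, I would substitute \eqref{eq:elementwise}. The invertibility of each $\bm{M}^{(k)}$, via $\bm{\Lambda}_k=\bm{U}_k\bm{M}^{(k)}\bm{V}_k$, ensures that every $\lambda_{j}^{(k)}$ is nonzero, so I can factor $\prod_{s=1}^d\lambda_{j_s}^{(s)}$ out of the denominator in \eqref{eq:elementwise}, rewriting it as
$$\tensor{L}^{\odot-1}(j_1,\ldots,j_d)\;=\;\frac{1}{\bigl(\prod_{s=1}^d\lambda_{j_s}^{(s)}\bigr)\sum_{k=1}^d \mu_{j_k}^{(k)}/\lambda_{j_k}^{(k)}}.$$
The scalar factor $\sum_{k=1}^d \mu_{j_k}^{(k)}/\lambda_{j_k}^{(k)}$ from the previous paragraph then cancels exactly against the corresponding factor in the denominator, leaving $\prod_{s=1}^d 1/\lambda_{j_s}^{(s)}$. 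By the definitions of $\bm{f}^{(s)}$, this is precisely the $(j_1,\ldots,j_d)$ entry of the rank-one Kronecker-product tensor $\tensor{F}=\bm{f}^{(1)}\otimes\cdots\otimes\bm{f}^{(d)}$, which proves \eqref{eq:displace_tensor}.

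The argument is essentially algebraic bookkeeping, with no approximation, induction, or limit involved. The only nonroutine step, and therefore the only place where care is required, is recognizing that invertibility of $\bm{M}^{(k)}$ is exactly what licenses factoring $\prod_s \lambda_{j_s}^{(s)}$ out of the denominator; this is why the hypothesis is stated on $\bm{M}^{(k)}$ rather than on $\bm{S}^{(k)}$, and it explains why the displacement data $(\bm{A}^{(k)},\tensor{F})$ take their specific form as diagonal matrices of ratios $\mu_{j}^{(k)}/\lambda_{j}^{(k)}$ and a rank-one right-hand side built from the reciprocals of $\lambda_{j}^{(k)}$.
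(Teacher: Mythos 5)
Your proof is correct and follows essentially the same route as the paper: both verify the Sylvester equation entrywise by noting that each $\bm{A}^{(k)}$ is diagonal so the mode-$k$ product is a scalar rescaling by $\mu_{j_k}^{(k)}/\lambda_{j_k}^{(k)}$, and then using invertibility of $\bm{M}^{(k)}$ to factor $\prod_s\lambda_{j_s}^{(s)}$ out of the denominator in \eqref{eq:elementwise} so that the sum of ratios cancels, leaving the rank-one right-hand side $\tensor{F}$. The only cosmetic difference is that you make the pointwise-rescaling interpretation of $\times_k\bm{A}^{(k)}$ explicit, whereas the paper writes the resulting algebraic identity directly.
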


\begin{proof}
    Since the matrices $\left\{\bm{M}^{(k)}:k=1,2,\ldots,d\right\}$ are invertible, i.e., $\lambda_{j_k}^{(k)}\neq0$ for all $j_k=1,2,\ldots,n_k$ and $k=1,2,\ldots,d$, the $(j_1,j_2,\ldots,j_d)$th element of $\tensor{L}^{\odot-1}$ can be rewritten as 
    \begin{equation}\label{eq:elementwise-1}
     \frac{1}{\prod\limits_{k=1}^d\lambda_{j_k}\left(\sum\limits_{k=1}^{d}\frac{\mu_{j_k}^{(k)}}{\lambda_{j_k}^{(k)}}\right)}.
\end{equation}
It satisfies 
\begin{equation*}
    \frac{\frac{\mu_{j_1}^{(1)}}{\lambda_{j_1}^{(1)}}}{\prod\limits_{k=1}^d\lambda_{j_k}^{(k)}\left(\sum\limits_{k=1}^{d}\frac{\mu_{j_k}^{(k)}}{\lambda_{j_k}^{(k)}}\right)} + \frac{\frac{\mu_{j_2}^{(2)}}{\lambda_{j_2}^{(2)}}}{\prod\limits_{k=1}^d\lambda_{j_k}^{(k)}\left(\sum\limits_{k=1}^{d}\frac{\mu_{j_k}^{(k)}}{\lambda_{j_k}^{(k)}}\right)} + \cdots + \frac{\frac{\mu_{j_d}^{(d)}}{\lambda_{j_d}^{(d)}}}{\prod\limits_{k=1}^d\lambda_{j_k}^{(k)}\left(\sum\limits_{k=1}^{d}\frac{\mu_{j_k}^{(k)}}{\lambda_{j_k}^{(k)}}\right)} = \frac{1}{\prod\limits_{k=1}^d\lambda_{j_k}^{(k)}},
\end{equation*}
which implies that the following equation 
\begin{equation*}
    \tensor{L}^{\odot-1}\times_{1}\bm{A}^{(1)}+\tensor{L}^{\odot-1}\times_2\bm{A}^{(2)}+\cdots+\tensor{L}^{\odot-1}\times_{d}\bm{A}^{(d)} = \tensor{F}
\end{equation*}
holds.
\end{proof}

Next, we will show how to estimate the distribution of singular values of the unfolding matrix for $\tensor{L}^{\odot-1}$ when $\tensor{L}^{\odot-1}$ has a displacement structure. Two key lemmas (Lemma \ref{lem:displacement_tensor} and \ref{lem:displacement-2}), which are found in \cite{beckermann2017singular,beckermann2019bounds,shi}, are given to establish our main theorem (Theorem \ref{thm:inverse_displacement}). We state the two lemmas as follows.

\begin{lemma}\label{lem:displacement_tensor}
   (See \cite{shi}.) Let $\tensor{X}\in\mathbb{C}^{n_1\times n_2\times\cdots\times n_d}$ be a $d$th-order tensor with $\left(\bm{A}^{(1)},\bm{A}^{(2)},\ldots,\bm{A}^{(d)}\right)$-displacement structure of $\tensor{F}\in\mathbb{C}^{n_1\times n_2\times\cdots\times n_d}$, and $\left\{\bm{A}^{(k)}\in\mathbb{C}^{n_k\times n_k}:k=1,2,\ldots,d\right\}$ be normal matrices.
 Denote the eigenvalues of $\bm{A}^{(k)}$ as $\lambda\left(\bm{A}^{(k)}\right)$, and let $\mathcal{E}_{k}$ and $\mathcal{F}_{k}$ be complex sets such that 
 \begin{equation*}
     \lambda\left(\bm{A}^{(1)}\right)+\cdots+\lambda\left(\bm{A}^{(k)}\right)\subset\mathcal{E}_{k}\quad \text{and}\quad -\left(\lambda\left(\bm{A}^{(k+1)}\right)+\cdots+\lambda\left(\bm{A}^{(d)}\right)\right)\subset\mathcal{F}_{k}.
 \end{equation*}
Then for each $k\in\{1,2,\ldots,d-1\}$, the singular values of $\bm{X}_{(\mathcal{I}_k)}$ satisfy
    \begin{equation}\label{eq:sv_tensor}
        \sigma_{i+\nu_kj}\left(\bm{X}_{(\mathcal{I}_k)}\right)\leq Z_{j}\left(\mathcal{E}_k,\mathcal{F}_k\right)\sigma_{i}\left(\bm{X}_{(\mathcal{I}_k)}\right),\quad 1\leq i+\nu_kj\leq\min\left\{\prod\limits_{s=1}^kn_s,\prod\limits_{s=k+1}^dn_s\right\},
    \end{equation}
    where $(1,\nu_{1},\ldots,\nu_{d-1},1)$ is the TT rank of $\tensor{F}$, and $Z_j(\mathcal{E}_k,\mathcal{F}_k)$ is the Zolotarev number with respect to $\mathcal{E}_k$ and $\mathcal{F}_k$.
\end{lemma}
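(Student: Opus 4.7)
The plan is to reduce the multidimensional displacement equation \eqref{eq:displace_tensor} to a single matrix Sylvester equation by matricizing along $\mathcal{I}_k$, then invoke the Beckermann–Townsend type bound for Sylvester equations (which is Lemma \ref{lem:displacement-2} referenced in the paper). Concretely, I would first verify the identity
\begin{equation*}
    \bigl(\tensor{X}\times_s \bm{A}^{(s)}\bigr)_{(\mathcal{I}_k)} = \begin{cases} \bigl(\bm{I}\otimes\cdots\otimes\bm{A}^{(s)}\otimes\cdots\otimes\bm{I}\bigr)\,\bm{X}_{(\mathcal{I}_k)}, & s\leq k,\\[2pt] \bm{X}_{(\mathcal{I}_k)}\bigl(\bm{I}\otimes\cdots\otimes\bm{A}^{(s)}\otimes\cdots\otimes\bm{I}\bigr)^{\!\top}, & s>k,\end{cases}
\end{equation*}
so that matricizing the displacement equation yields the standard Sylvester form $\bm{B}_k\,\bm{X}_{(\mathcal{I}_k)}-\bm{X}_{(\mathcal{I}_k)}(-\bm{C}_k) = \bm{F}_{(\mathcal{I}_k)}$, where $\bm{B}_k = \sum_{s=1}^k \bm{I}\otimes\cdots\otimes\bm{A}^{(s)}\otimes\cdots\otimes\bm{I}$ and $\bm{C}_k = \sum_{s=k+1}^d \bm{I}\otimes\cdots\otimes\bm{A}^{(s)}\otimes\cdots\otimes\bm{I}$ (the transpose will disappear once normality is used to diagonalize).

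Next I would check the spectral hypotheses needed to apply the Zolotarev bound. Since each $\bm{A}^{(s)}$ is normal, both Kronecker sums $\bm{B}_k$ and $-\bm{C}_k$ are normal, with eigenvalues
\begin{equation*}
    \lambda(\bm{B}_k) = \lambda(\bm{A}^{(1)}) + \cdots + \lambda(\bm{A}^{(k)}) \subset \mathcal{E}_k, \qquad \lambda(-\bm{C}_k) = -\bigl(\lambda(\bm{A}^{(k+1)})+\cdots+\lambda(\bm{A}^{(d)})\bigr)\subset \mathcal{F}_k,
\end{equation*}
matching exactly the containments assumed in the lemma. I would then bound the rank of the right-hand side: because $\tensor{F}$ has TT ranks $(1,\nu_1,\ldots,\nu_{d-1},1)$, the separation-of-variables interpretation of the TT format gives $\operatorname{rank}\bigl(\bm{F}_{(\mathcal{I}_k)}\bigr)\leq \nu_k$.

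Finally, I would invoke the matrix-level Sylvester/Zolotarev bound (Lemma \ref{lem:displacement-2}): for any Sylvester equation $\bm{B}X - X(-\bm{C}) = F$ with normal $\bm{B},-\bm{C}$ whose spectra are contained in disjoint sets $\mathcal{E},\mathcal{F}$ and with $\operatorname{rank}(F)\leq \nu$, the singular values of $X$ obey $\sigma_{i+\nu j}(X)\leq Z_j(\mathcal{E},\mathcal{F})\,\sigma_i(X)$. Substituting $X=\bm{X}_{(\mathcal{I}_k)}$, $\nu=\nu_k$, $\mathcal{E}=\mathcal{E}_k$, $\mathcal{F}=\mathcal{F}_k$ yields \eqref{eq:sv_tensor} and completes the proof. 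The main obstacle I anticipate is the bookkeeping in the first step: one must carefully track how the index set $\mathcal{I}_k$ splits the modes $1,\ldots,d$ into ``row'' and ``column'' blocks and then identify the resulting Kronecker-product operators as the claimed Kronecker sums; once this unfolding identity is established, the spectral and rank conditions follow cleanly and the Sylvester-level Zolotarev inequality does the rest.
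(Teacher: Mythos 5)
Your reconstruction is essentially the argument in Shi--Townsend (the reference the paper cites for this lemma, giving no proof of its own): matricize the multidimensional Sylvester equation along $\mathcal{I}_k$, observe that the operators on the row and column sides become Kronecker sums whose normality and spectra are inherited from the $\bm{A}^{(s)}$, bound $\operatorname{rank}\bigl(\bm{F}_{(\mathcal{I}_k)}\bigr)\leq\nu_k$ from the TT rank of $\tensor{F}$, and then apply the Beckermann--Townsend singular-value bound for matrix Sylvester equations with normal coefficients. The one slip is a citation mix-up: what you call ``Lemma \ref{lem:displacement-2}'' is not the Sylvester-level singular-value bound; in the paper that lemma only evaluates $Z_j(\mathcal{E},\mathcal{F})$ for concentric disk/exterior-of-disk pairs. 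The matrix-level inequality $\sigma_{i+\nu j}(X)\leq Z_j(\mathcal{E},\mathcal{F})\sigma_i(X)$ you need is the Beckermann--Townsend result from \cite{beckermann2017singular,beckermann2019bounds}, which the paper uses implicitly but does not restate as a separate lemma. Your handling of the transpose on the column side is also correct to flag but slightly glossed over: matricizing $\tensor{X}\times_s\bm{A}^{(s)}$ for $s>k$ produces a right multiplication by $(\bm{I}\otimes\cdots\otimes\bm{A}^{(s)}\otimes\cdots\otimes\bm{I})^{\!\top}$, and one then notes that transposition preserves both normality and the (unconjugated) spectrum, so the containment $\lambda(-\bm{C}_k^{\!\top})\subset\mathcal{F}_k$ still holds; it is not that the transpose ``disappears'' but that it is harmless for the hypotheses being invoked.
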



From Lemma \ref{lem:displacement_tensor}, we know that the relationship between two singular values of $\bm{X}_{(\mathcal{I}_k)}$. The tensor $\tensor{X}$ has a low-rank tensor-train approximation if the singular values of the unfolding matrices decrease rapidly. This depends on the TT rank of $\tensor{F}$ and the Zolotarev number with respect to two complex sets associated with eigenvalues of $\bm{A}_k$. 
For the tensor $\tensor{L}^{\odot-1}$, \Cref{thm:displacement} shows that it has a displacement structure of a rank-one TT tensor, so we only need to estimate the bound of the Zolotarev number. We first present the definition of the Zolotarev number, as stated in Definition \ref{def:zoloterev}.

\begin{definition}\label{def:zoloterev}
   (See \cite{beckermann2017singular,beckermann2019bounds}.) Let $\mathcal{E}$ and $\mathcal{F}$ be two sets of complex numbers, then the corresponding Zolotarev number is defined as
   \begin{equation*}
    Z_j(\mathcal{E},\mathcal{F})=\inf\limits_{r\in\mathcal{R}_{j,j}}\frac{\sup\limits_{z\in\mathcal{E}}|r(z)|}{\inf\limits_{z\in\mathcal{F}}|r(z)|},
\end{equation*}
where $\mathcal{R}_{j,j}$ is the set of irreducible rational functions of the form $\frac{p(x)}{q(x)}$, and $p(x), q(x)$ are polynomials of degree at most $j$. 
\end{definition}

Here, we present a useful result in Lemma \ref{lem:displacement-2} for the evaluation of the Zolotarev number. It is adequate for our purpose of ensuring low-rank approximation.


\begin{lemma}\label{lem:displacement-2}
    (See \cite{beckermann2017singular,beckermann2019bounds}.) Let $\mathcal{E}$ and $\mathcal{F}$ be two sets in $\mathbb{C}$, and there exists two positive numbers $0\leq D_1\leq D_2$ such that 
    \begin{equation*}
        \mathcal{E}\subset\{z:|z-c|\leq D_1\}\quad \text{and}\quad \mathcal{F}\subset\{z:|z-c|\geq D_2\},
    \end{equation*}
    then we have
    \begin{equation}\label{eq:zolotarev-1}
        Z_j(\mathcal{E},\mathcal{F})=\left(Z_1(\mathcal{E},\mathcal{F})\right)^j,
    \end{equation}
     and $Z_1(\mathcal{E},\mathcal{F})=\frac{D_1}{D_2}$. 
\end{lemma}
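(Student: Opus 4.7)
The plan is to establish both stated identities by combining a direct test-function upper bound with the classical extremal theory for Zolotarev's problem on concentric disks. The overall strategy proceeds in three stages.

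First, I would prove the upper bound $Z_1(\mathcal{E},\mathcal{F}) \leq D_1/D_2$ by exhibiting the explicit competitor $r_0(z) = z - c \in \mathcal{R}_{1,1}$ (a degree-one polynomial divided by $1$). Under the hypothesis $\mathcal{E} \subset \{z : |z-c| \leq D_1\}$ and $\mathcal{F} \subset \{z : |z-c| \geq D_2\}$, one has $\sup_{z \in \mathcal{E}}|r_0(z)| \leq D_1$ and $\inf_{z \in \mathcal{F}}|r_0(z)| \geq D_2$; forming the ratio and then the infimum in the definition of $Z_1$ yields the bound. Next, the submultiplicative direction $Z_j \leq Z_1^j$ follows by raising any near-extremizer $r_1 \in \mathcal{R}_{1,1}$ for $Z_1$ to the $j$-th power: the function $r_1^j$ lies in $\mathcal{R}_{j,j}$ and satisfies $\sup_\mathcal{E}|r_1^j|/\inf_\mathcal{F}|r_1^j| = (\sup_\mathcal{E}|r_1|/\inf_\mathcal{F}|r_1|)^j$. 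Chaining these gives $Z_j \leq Z_1^j \leq (D_1/D_2)^j$.

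Second, for the matching lower bounds that upgrade the two inequalities into the stated equalities, I would invoke the extremal characterization of the two-disk Zolotarev problem. The key observation is that on the concentric geometry determined by $D_1$ and $D_2$, the extremal degree-one rational function is, up to rotation about $c$, precisely $r_0(z) = z-c$. This can be proven by a rotation-symmetry plus maximum modulus argument: any candidate Möbius transformation $(\alpha z + \beta)/(\gamma z + \delta) \in \mathcal{R}_{1,1}$ can be symmetrized by averaging its $|\cdot|$-modulus over the $S^1$ action $z \mapsto e^{i\theta}(z-c)+c$ without increasing the ratio, collapsing any minimizer into a scalar multiple of $z-c$. This yields $Z_1 \geq D_1/D_2$. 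For general $j$, I would factor any $r \in \mathcal{R}_{j,j}$ into $j$ Möbius pieces via the fundamental theorem of algebra and apply the canonical Blaschke-product solution of the two-disk Zolotarev extremal problem, which is conformally invariant on the doubly connected region $\{D_1 < |z-c| < D_2\}$; the closed-form extremizer turns out to be $(z-c)^j$ (up to scaling), giving $Z_j \geq (D_1/D_2)^j$ and closing both equalities.

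The main obstacle will be the rigorous lower bound in the second stage: the ratio $\sup_\mathcal{E}|r|/\inf_\mathcal{F}|r|$ does not decompose multiplicatively across a product factorization $r = \prod r_k$ in any naive way, so one cannot simply bound each factor and multiply. The argument must instead pass through the explicit closed-form extremizer coming from conformal invariance on the annular region, as in the cited Beckermann–Townsend framework. Once the extremizer is identified in closed form, both $Z_1 = D_1/D_2$ and $Z_j = Z_1^j$ are read off directly, and the two equalities follow simultaneously.
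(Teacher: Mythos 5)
The paper offers no proof of this lemma; it is quoted from \cite{beckermann2017singular,beckermann2019bounds}, so your attempt has to stand on its own. Your first stage is fine: the test function $r_0(z)=z-c$ gives $Z_1(\mathcal{E},\mathcal{F})\le D_1/D_2$, and taking $j$-th powers of a near-minimizer gives $Z_j\le Z_1^j$, hence $Z_j\le (D_1/D_2)^j$. It is worth noting that this upper-bound half is the only part of the lemma the paper actually uses (in Theorem \ref{thm:inverse_displacement} the Zolotarev number only needs to be bounded above), so your Stage 1 already covers what matters downstream.

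The genuine gap is in Stage 2, the claimed equalities. First, as literally stated (with $\mathcal{E}$, $\mathcal{F}$ merely \emph{contained} in the disk and in the exterior), the equality cannot be proved because it is false in general: if $\mathcal{E}$ is a single point $\{a\}$ with $a\notin\mathcal{F}$, then $r(z)=(z-a)^j$ gives $Z_j(\mathcal{E},\mathcal{F})=0\ne (D_1/D_2)^j$; equality can only be expected when $\mathcal{E}$ and $\mathcal{F}$ are the full disk $\{|z-c|\le D_1\}$ and the full exterior $\{|z-c|\ge D_2\}$. Second, even in that extremal configuration your lower-bound arguments do not go through as described: averaging $|r(e^{i\theta}(z-c)+c)|$ over $\theta$ does not produce the modulus of a function in $\mathcal{R}_{1,1}$, so the ``symmetrization collapses the minimizer to a multiple of $z-c$'' step is unjustified; the factorization of $r\in\mathcal{R}_{j,j}$ into M\"obius pieces fails for exactly the reason you yourself point out (the sup/inf ratio is not multiplicative over factors); and the final appeal to ``conformal invariance'' and a ``closed-form Blaschke extremizer'' is a pointer to the literature rather than an argument (Zolotarev numbers are M\"obius-invariant, not invariant under general conformal maps of the annulus). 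A correct elementary route for the lower bound in the concentric case is a Jensen-formula/argument-principle argument: since $\inf_{\mathcal{F}}|r|>0$ forces all zeros of $r$ into $\{|z-c|<D_2\}$ and finiteness of $\sup_{\mathcal{E}}|r|$ forces all poles outside $\{|z-c|\le D_1\}$, the circular mean $I(t)=\frac{1}{2\pi}\int_0^{2\pi}\log|r(c+te^{i\theta})|\,d\theta$ has derivative at most $j$ with respect to $\log t$ on $(D_1,D_2)$, whence $\log\bigl(\inf_{\mathcal{F}}|r|/\sup_{\mathcal{E}}|r|\bigr)\le j\log(D_2/D_1)$, i.e.\ the ratio is at least $(D_1/D_2)^j$. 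Without some argument of this kind, your proposal establishes only the inequality $Z_j\le (D_1/D_2)^j$ — which, again, is all the paper needs, but is not the equality you set out to prove.
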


According to \Cref{thm:displacement}, Lemma \ref{lem:displacement_tensor} and \ref{lem:displacement-2}, we can give a sufficient condition for the singular values of the unfolding matrices of $\tensor{L}^{\odot-1}$ to decay exponentially, i.e., the tensor $\tensor{L}^{\odot-1}$ can be well approximated by a low-rank TT tensor. A rigorous statement is given in \Cref{thm:inverse_displacement}.

\begin{theorem}\label{thm:inverse_displacement}
Let $\bm{X}_{(\mathcal{I}_k)}\in\mathbb{C}^{\prod\limits_{s=1}^{k}n_s\times\prod\limits_{s=k+1}^dn_s}$ be the matricization of $\tensor{L}^{\odot-1}$ along the index set $\mathcal{I}_k=\{1,\ldots,k\}$. For each $k\in\{1,2,\ldots,d-1\}$, if there exist a closed disk $\mathcal{D}_{k}=\{z:|z-c|\leq D\}$ in $\mathbb{C}$, satisfying
\begin{equation}\label{eq:cond-1}
\left\{\begin{array}{c}
     \left\{\sum\limits_{s=1}^k\frac{\mu_{j_s}^{(s)}}{\lambda_{j_s}^{(s)}}:j_s=1,2,\ldots,n_s; s=1,\ldots,k\right\}\subset\mathcal{D}_{k}  \\
     \left\{\sum\limits_{s=k+1}^d-\frac{\mu_{j_s}^{(s)}}{\lambda_{j_s}^{(s)}}:j_s=1,2,\ldots,n_s; s=k+1,\ldots,d\right\}\cap\mathcal{D}_{k}=\emptyset
\end{array}\right. 
\end{equation}
or
\begin{equation}\label{eq:cond-2}
\left\{\begin{array}{c}
     \left\{\sum\limits_{s=1}^k\frac{\mu_{j_s}^{(s)}}{\lambda_{j_s}^{(s)}}:j_s=1,2,\ldots,n_s; s=1,\ldots,k\right\}\cap\mathcal{D}_{k}=\emptyset  \\
      \left\{\sum\limits_{s=k+1}^d-\frac{\mu_{j_s}^{(s)}}{\lambda_{j_s}^{(s)}}:j_s=1,2,\ldots,n_s; s=k+1,\ldots,d\right\}\subset\mathcal{D}_{k}
\end{array}\right.
\end{equation}
then the singular values of $\bm{X}_{(\mathcal{I}_k)}$ decay exponentially. That is, there is a real number $q\in(0,1)$ such that 
\begin{equation}\label{eq:matricization_decay_1}
    \sigma_{j+1}\left(\bm{X}_{(\mathcal{I}_k)}\right)\leq q^{j}\sigma_{1}\left(\bm{X}_{(\mathcal{I}_k)}\right)
\end{equation}
holds for all $j=1,2,\ldots,\min\left\{\prod\limits_{s=1}^kn_s,\prod\limits_{s=k+1}^dn_s\right\}-1$.
\end{theorem}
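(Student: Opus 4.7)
The plan is to chain \Cref{thm:displacement}, \Cref{lem:displacement_tensor}, and \Cref{lem:displacement-2}. By \Cref{thm:displacement} the tensor $\tensor{L}^{\odot-1}$ satisfies the displacement equation \eqref{eq:displace_tensor} with the diagonal (hence normal) matrices $\bm{A}^{(s)}=\diag(\mu_{j_s}^{(s)}/\lambda_{j_s}^{(s)})$, and with right-hand side $\tensor{F}=\bm{f}^{(1)}\otimes\cdots\otimes\bm{f}^{(d)}$, which is a rank-one TT tensor, so every intermediate TT rank appearing in \Cref{lem:displacement_tensor} collapses to $\nu_k=1$. Specializing \Cref{lem:displacement_tensor} at $\nu_k=1$ and $i=1$ already yields $\sigma_{j+1}(\bm{X}_{(\mathcal{I}_k)})\leq Z_j(\mathcal{E}_k,\mathcal{F}_k)\,\sigma_1(\bm{X}_{(\mathcal{I}_k)})$ for any admissible choice of $\mathcal{E}_k$ and $\mathcal{F}_k$. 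Hence the entire theorem reduces to exhibiting a single ratio $q\in(0,1)$ with $Z_j(\mathcal{E}_k,\mathcal{F}_k)\leq q^j$.

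Because $\bm{A}^{(s)}$ is diagonal with entries $\mu_{j_s}^{(s)}/\lambda_{j_s}^{(s)}$, the Minkowski sums $\lambda(\bm{A}^{(1)})+\cdots+\lambda(\bm{A}^{(k)})$ and $-(\lambda(\bm{A}^{(k+1)})+\cdots+\lambda(\bm{A}^{(d)}))$ are precisely the finite sets displayed in \eqref{eq:cond-1} and \eqref{eq:cond-2}. Under \eqref{eq:cond-1} I would take $\mathcal{E}_k:=\mathcal{D}_k$ and let $\mathcal{F}_k$ be the (finite) set of cumulative sums $\sum_{s=k+1}^{d}-\mu_{j_s}^{(s)}/\lambda_{j_s}^{(s)}$. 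Since $\mathcal{F}_k$ is finite and disjoint from the closed disk $\mathcal{D}_k$, the minimum $D':=\min_{z\in\mathcal{F}_k}|z-c|$ is attained and strictly exceeds $D$, so $\mathcal{F}_k\subset\{z:|z-c|\geq D'\}$. Then \Cref{lem:displacement-2} gives $Z_j(\mathcal{E}_k,\mathcal{F}_k)\leq(D/D')^j$, and setting $q:=D/D'\in(0,1)$ closes this case.

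For hypothesis \eqref{eq:cond-2} the roles of the two cumulative sums with respect to $\mathcal{D}_k$ are swapped, so the construction above places the disk inclusion on $\mathcal{F}_k$ rather than on $\mathcal{E}_k$. The cleanest remedy is the symmetry $Z_j(\mathcal{E},\mathcal{F})=Z_j(\mathcal{F},\mathcal{E})$, which follows directly from \Cref{def:zoloterev} by sending $r\mapsto 1/r$ inside the infimum; after exchanging $\mathcal{E}_k$ and $\mathcal{F}_k$ the argument of the previous paragraph applies verbatim and delivers the same geometric rate.

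The step I expect to be the main obstacle is the upgrade from the hypothesis "$\mathcal{F}_k\cap\mathcal{D}_k=\emptyset$" (mere disjointness from a closed disk) to the stronger containment "$\mathcal{F}_k\subset\{z:|z-c|\geq D'\}$ for some $D'>D$" that \Cref{lem:displacement-2} actually requires. This is where the finiteness of the index sets $\{j_s\}$ is essential: only finitely many sums are considered, so the minimum of $|z-c|$ over $\mathcal{F}_k$ is attained and strictly larger than $D$. Everything else in the chain---verifying normality of the $\bm{A}^{(s)}$, reading off $\nu_k=1$ from the rank-one structure of $\tensor{F}$, and assembling \eqref{eq:matricization_decay_1}---is routine bookkeeping once this separation is in hand.
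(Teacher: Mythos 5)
Your argument is correct and follows the same chain the paper uses: \Cref{thm:displacement} supplies the displacement structure with a rank-one right-hand side, \Cref{lem:displacement_tensor} then gives the Zolotarev bound with $\nu_k=1$, and \Cref{lem:displacement-2} evaluates that bound for a disk and the exterior of a larger disk. Your use of finiteness of the spectral-sum set to obtain $D'>D$ directly (rather than the paper's auxiliary $\eta$ and $\mathcal{D}_k'$), and your explicit appeal to the symmetry $Z_j(\mathcal{E},\mathcal{F})=Z_j(\mathcal{F},\mathcal{E})$ to handle condition \eqref{eq:cond-2} (which the paper dismisses with ``can be proved similarly''), are minor cleanups of the same argument rather than a different route.
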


\begin{proof}
Without loss of generality, we consider the case that condition \eqref{eq:cond-1} is satisfied, and the case where condition \eqref{eq:cond-2} is satisfied can be proved similarly. Since 
\[
\left\{\sum\limits_{s=k+1}^d-\frac{\mu_{j_s}^{(s)}}{\lambda_{j_s}^{(s)}}:j_s=1,2,\ldots,n_s; s=k+1,\ldots,d\right\}\cap\mathcal{D}_k=\emptyset,
\]
their distance is greater than 0 and denoted as $D'$. Let $\eta < D'$ be a sufficiently small positive real number and $$\mathcal{D}_k^{'}=\{z:|z-c|>D+D'-\eta\}.$$
It is clear that 
\[
\left\{\sum\limits_{s=k+1}^d-\frac{\mu_{j_s}^{(s)}}{\lambda_{j_s}^{(s)}}:j_s=1,2,\ldots,n_s; s=k+1,\ldots,d\right\}\subset\mathcal{D}^{'}_k.
\]
Then by \Cref{thm:displacement} and Lemma \ref{lem:displacement_tensor}, we know that the singular values of $\bm{X}_{(\mathcal{I}_k)}$ satisfy
\begin{equation}\label{eq:matricization_decay_2}
    \sigma_{j+1}\left(\bm{X}_{(\mathcal{I}_k)}\right) \leq Z_1(\mathcal{D}_k,\mathcal{D}^{'}_k)\sigma_{j}\left(\bm{X}_{(\mathcal{I}_k)}\right) \leq \left(Z_1(\mathcal{D}_k,\mathcal{D}^{'}_k)\right)^j\sigma_{1}\left(\bm{X}_{(\mathcal{I}_k)}\right)
\end{equation}
for all $j=1,2,\ldots,\min\left\{\prod\limits_{s=1}^kn_s,\prod\limits_{s=k+1}^dn_s\right\}-1$.
Further, by Lemma \ref{lem:displacement-2}, we have 
\[Z_1(\mathcal{D}_k,\mathcal{D}^{'}_k)=\frac{D}{D+D'-\eta}.\]
Since $\eta$ is small enough, there exists a real number $q\in(0,1)$ such that the inequality 
\[\frac{D}{D+D'-\eta}\leq q<1\]
holds, i.e., \eqref{eq:matricization_decay_1} is true.
\end{proof}

From \Cref{thm:inverse_displacement}, we can further give an estimate of the upper bound of the TT rank of the tensor such that it approximates $\tensor{L}^{\odot-1}$ with a predetermined accuracy requirement $\varepsilon\in(0,1)$. This is stated as the following Corollary \ref{coro:TT rank}.

\begin{corollary}[Estimation of TT ranks]\label{coro:TT rank}
    Let $\varepsilon\in(0,1)$ be a predetermined accuracy requirement and $\mathcal{S}_{\tensor{Y}}$ be a set as follows: 
    \begin{equation*}
        \mathcal{S}_{\tensor{Y}} := \left\{\tensor{Y}:\|\tensor{L}^{\odot-1} - \tensor{Y}\|_F\leq\varepsilon\|\tensor{L}^{\odot-1}\|_F\right\}.
    \end{equation*}
    Then there exists a tensor $\tensor{Y}\in\mathcal{S}_{\tensor{Y}}$ such that its the TT rank of $\tensor{Y}$ satisfies
    \begin{equation*}
        r_{k} \leq\left\lceil\log_{\tau^2}\left(\frac{(1-\tau^2)\varepsilon^2}{d-1}+\tau^{2n}\right)\right\rceil,\quad k=1,2,\ldots,d-1,
    \end{equation*} 
    where $\tau=\frac{D}{D+D'}$ and $n=\min\left\{n_kr_{k-1},\prod\limits_{s=k+1}^dn_s\right\}$.
\end{corollary}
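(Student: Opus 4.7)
The plan is to realize $\tensor{Y}$ explicitly as the output of the TT-SVD (truncated-SVD) procedure applied to $\tensor{L}^{\odot-1}$ with mode-$k$ truncation rank $r_k$, and then to use the singular-value decay from Theorem \ref{thm:inverse_displacement} to certify how small each $r_k$ can be while still meeting the accuracy budget $\varepsilon$. Concretely, I would invoke the standard TT-SVD error estimate
\[
\|\tensor{L}^{\odot-1} - \tensor{Y}\|_F^2 \;\leq\; \sum_{k=1}^{d-1} \sum_{j > r_k} \sigma_j^2\!\left(\bm{X}_{(\mathcal{I}_k)}\right),
\]
which reduces the problem to bounding the tail of each unfolding's singular spectrum separately.

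To bound one such tail, I would take the inequality $\sigma_{j+1}(\bm{X}_{(\mathcal{I}_k)}) \leq q^{j}\,\sigma_{1}(\bm{X}_{(\mathcal{I}_k)})$ supplied by Theorem \ref{thm:inverse_displacement}, let $\eta \to 0$ so that $q \to \tau = D/(D+D')$ (this is where the ratio $\tau$ in the statement comes from), and sum the finite geometric series
\[
\sum_{j=r_k+1}^{n}\sigma_j^2\!\left(\bm{X}_{(\mathcal{I}_k)}\right) \;\leq\; \sigma_1^2\!\left(\bm{X}_{(\mathcal{I}_k)}\right)\cdot\frac{\tau^{2r_k}-\tau^{2n}}{1-\tau^{2}},
\]
where $n = \min\{n_k r_{k-1},\prod_{s=k+1}^d n_s\}$ is the number of singular values effectively present at step $k$ of TT-SVD (after the previous truncation to rank $r_{k-1}$). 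Using the elementary bound $\sigma_1^2(\bm{X}_{(\mathcal{I}_k)}) \leq \|\bm{X}_{(\mathcal{I}_k)}\|_F^2 = \|\tensor{L}^{\odot-1}\|_F^2$, I would then demand that each mode contributes at most $\varepsilon^2/(d-1)\cdot \|\tensor{L}^{\odot-1}\|_F^2$ to the squared error, which is equivalent to the inequality
\[
\tau^{2r_k} \;\leq\; \frac{(1-\tau^2)\varepsilon^{2}}{d-1} + \tau^{2n}.
\]
Taking $\log_{\tau^2}(\cdot)$ (which reverses the inequality since $\tau^2 < 1$) and taking ceilings to ensure $r_k$ is an integer yields exactly the stated bound, and summing the $d-1$ mode contributions gives $\|\tensor{L}^{\odot-1}-\tensor{Y}\|_F^2 \leq \varepsilon^2 \|\tensor{L}^{\odot-1}\|_F^2$, placing $\tensor{Y}$ in $\mathcal{S}_{\tensor{Y}}$.

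The routine parts are the TT-SVD error telescoping and the geometric-series computation. The one place to be careful is the passage from Theorem \ref{thm:inverse_displacement}'s parameter $q = D/(D+D'-\eta)$ to $\tau = D/(D+D')$: the theorem only guarantees existence of some $q\in(0,1)$ strictly larger than $\tau$, so the clean expression in the corollary is obtained by letting $\eta\downarrow 0$ and invoking continuity of the bound in $\tau$. The other subtlety is justifying that $n$ can be taken as $\min\{n_k r_{k-1},\prod_{s=k+1}^d n_s\}$ rather than the larger $\min\{\prod_{s=1}^k n_s,\prod_{s=k+1}^d n_s\}$; I would handle this by noting that at the $k$-th step of TT-SVD the matrix that is actually factorized has only $n_k r_{k-1}$ rows once the preceding truncation has been applied, so its singular spectrum has at most $n$ nonzero entries and the finite geometric sum legitimately stops at index $n$.
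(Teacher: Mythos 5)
The paper does not actually supply a proof of Corollary~\ref{coro:TT rank}, so your proposal has to stand on its own. Your overall strategy — construct $\tensor{Y}$ by TT-SVD, split the error budget equally over the $d-1$ modes, sum a finite geometric series of singular values using the decay rate from Theorem~\ref{thm:inverse_displacement}, bound $\sigma_1$ by $\|\tensor{L}^{\odot-1}\|_F$, and invert $\tau^{2r_k}\le\frac{(1-\tau^2)\varepsilon^2}{d-1}+\tau^{2n}$ via $\log_{\tau^2}$ — is the right one, and all the algebra checks out. The $\eta\downarrow 0$ argument for passing from $q=D/(D+D'-\eta)$ to $\tau=D/(D+D')$ is also sound, since $\sigma_{j+1}\le q^j\sigma_1$ holds for every small $\eta>0$ while the left side is $\eta$-independent.

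There is, however, a gap in how you justify stopping the geometric sum at $n=\min\{n_kr_{k-1},\prod_{s>k}n_s\}$. Your opening display bounds the TT-SVD error by $\sum_{k}\sum_{j>r_k}\sigma_j^2(\bm{X}_{(\mathcal{I}_k)})$, where $\sigma_j$ are singular values of the \emph{original} unfolding. That sum runs up to $\min\{\prod_{s\le k}n_s,\prod_{s>k}n_s\}$, and the original unfolding need not have zero singular values beyond index $n$ — so you cannot simply truncate this sum at $n$. Your remark that the intermediate matrix in TT-SVD has only $n_kr_{k-1}$ rows is correct, but that matrix is \emph{not} $\bm{X}_{(\mathcal{I}_k)}$; the decay estimate of Theorem~\ref{thm:inverse_displacement} applies to $\bm{X}_{(\mathcal{I}_k)}$, not directly to the intermediate reshape $\hat{\bm{M}}_k$. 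To close the gap you should work with the step-$k$ truncation error $\epsilon_k^2=\sum_{j>r_k}\hat{\sigma}_j^2(\hat{\bm{M}}_k)$ from the start, note that $\hat{\bm{M}}_k$ has rank at most $n$ so the sum really does terminate at $j=n$, and then invoke the interlacing/monotonicity property of the TT-SVD sweep (the intermediate matrix is obtained from $\bm{X}_{(\mathcal{I}_k)}$ by an orthogonal projection and a reshape, so $\hat{\sigma}_j(\hat{\bm{M}}_k)\le\sigma_j(\bm{X}_{(\mathcal{I}_k)})$) to transfer the geometric decay to $\hat{\sigma}_j$. With that one extra lemma in place the rest of your derivation goes through and yields exactly the stated bound.
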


     

In practice, the computational verifiability of the condition given in \Cref{thm:inverse_displacement} is very important. 
Next, we will introduce how to verify whether the given condition is true. Without loss of generality, we take condition \eqref{eq:cond-1} as an example, which consists of two parts. The first one is to find a closed disk $\mathcal{D}_k$ in $\mathbb{C}$ such that 
\begin{equation*}
    \left\{\sum\limits_{s=1}^k\frac{\mu_{j_s}^{(s)}}{\lambda_{j_s}^{(s)}}:j_s=1,2,\ldots,n_s; s=1,\ldots,k\right\}\subset\mathcal{D}_{k}.
\end{equation*}
Essentially, it is equivalent to finding a closed disk that contains all elements of the $k$th-order tensor $\tensor{A}\in\mathbb{C}^{n_1\times n_2\times\cdots\times n_k}$ with entries
\begin{equation*}
    \tensor{A}({j_1,j_2,\ldots,j_k}) = \frac{\mu_{j_1}^{(1)}}{\lambda_{j_1}^{(1)}}+\frac{\mu_{j_2}^{(2)}}{\lambda_{j_2}^{(2)}}+\cdots+\frac{\mu_{j_k}^{(k)}}{\lambda_{j_k}^{(k)}}.
\end{equation*}
We denote $\tensor{A}_1, \tensor{A}_2$
as the real and imaginary parts of $\tensor{A}$, and let $\alpha_1,\beta_1$ and $\alpha_2,\beta_2$  be the smallest and largest elements of them respectively.
It is easy to prove that all elements of $\tensor{A}$ are in the disk 
$$\mathcal{D}_k = \{z:|z-c|\leq D\},$$ 
where 
\begin{equation*}
   c = \frac{\alpha_1+\beta_1}{2}+i\frac{\alpha_2+\beta_2}{2}\quad \text{and} \quad D = \sqrt{\left(\frac{\beta_1-\alpha_1}{2}\right)^2+\left(\frac{\beta_2-\alpha_2}{2}\right)^2}.
\end{equation*}
The second one is to determine whether $\sum\limits_{s=k+1}^{d}-\frac{\mu_{j_s}^{(s)}}{\lambda_{j_s}^{(s)}}$ is in the disk $\mathcal{D}_k$. Similarly, it is equivalent to finding the smallest magnitude element of the $(d-k)$th-order tensor $\tensor{B}$ with entries 
\begin{equation*}
    \tensor{B}({j_{k+1},j_{k+2},\ldots,j_d}) = \frac{\mu_{j_{k+1}}^{(k+1)}}{\lambda_{j_{k+1}}^{(k+1)}}+\frac{\mu_{j_{k+2}}^{(k+2)}}{\lambda_{j_{k+2}}^{(k+2)}}+\cdots+\frac{\mu_{j_{d}}^{(d)}}{\lambda_{j_d}^{(d)}} + c,
\end{equation*}
that is, the smallest element of $\tensor{C}=\tensor{B}\odot\overline{\tensor{B}}$. If $\min(\tensor{C})>D^2$, then 
\begin{equation*}
    \left\{\sum\limits_{s=k+1}^d-\frac{\mu_{j_s}^{(s)}}{\lambda_{j_s}^{(s)}}:j_s=1,2,\ldots,n_s; s=k+1,\ldots,d\right\}\cap\mathcal{D}_{k}=\emptyset.
\end{equation*}
Otherwise, it does not hold. 

In conclusion, the verification of condition \eqref{eq:cond-1} can be transformed into finding the smallest and largest elements of tensors $\tensor{A}_1$, $\tensor{A}_2$, and $\tensor{C}$. Referring to the TT representation of $\tensor{L}$, we know that the tensors $\tensor{A}$ and $\tensor{B}$ can be represented in TT formats with rank-$(1,2,\ldots,2,1)$ and $(1,3,\ldots,3,1)$, respectively. Further, by the definitions of $\tensor{A}_1$, $\tensor{A}_2$, and $\tensor{C}$, they also have a low-rank TT representation. More specifically, the TT rank of $\tensor{A}_1$ and $\tensor{A}_2$ is at most $(1,4,\ldots,4,1)$, and the TT rank of $\tensor{C}$ is at most $(1,9,\ldots,9,1)$. Using the low-rank TT representation of $\tensor{A}_1$, $\tensor{A}_2$, and $\tensor{C}$, their smallest and largest elements can be efficiently obtained by methods proposed in \cite{espig2020iterative,sozykin2022ttopt}.

\section{Application to Numerical PDEs}\label{sec:application}
In this section, we will show that matrices derived from PDE operators involved in the Poisson, Boltzmann, and Fokker-Planck equations discretized by FDM not only have the form \eqref{eq:coefficient matrix}, but also satisfy the condition given in \Cref{thm:inverse_displacement}. 
More interestingly, it provides us with a new perspective to understand the effects of the mesh size and the time step size on solving these PDEs, that is, the perspective of the TT rank of their corresponding inverse matrices.

\subsection{Poisson equation}
The first example we consider is the $d$-dimensional Poisson in a hypercube with the Dirichlet boundary condition:
\begin{equation}\label{eq:poisson}
    -\Delta u(\bm{x}) = f(\bm{x}).
\end{equation}
We discrete the Poisson equation \eqref{eq:poisson} using the central difference scheme with a three-point stencil in each dimension, and then its corresponding linear system can be written as
\begin{equation}\label{eq:discrete-poisson}
    \bm{L}\bm{u} = \bm{f}\quad \text{with}\quad \bm{L} = \sum\limits_{k=1}^d\bm{I}_n\otimes\cdots\otimes\bm{I}_n\otimes(-\bm{\Delta})\otimes\bm{I}_n\otimes\cdots\otimes\bm{I}_n
\end{equation}
where 
\begin{equation*}
    -\bm{\Delta} = \frac{1}{h^2}\left[\begin{array}{ccccc}
       2  & -1 &  &  &  \\
        -1 & 2 & \ddots &  &  \\
         & \ddots & \ddots & -1 &  \\
         &  & -1 & 2 & -1 \\
         &  &  & -1 & 2 \\
    \end{array}\right]\in\mathbb{R}^{n\times n}
\end{equation*}
is a symmetric positive definite matrix that corresponds to the one-dimensional negative Laplace operator, and $n$ is the number of grids in each dimension. 

\begin{theorem}\label{thm:poisson}
    Let the matrix $\bm{L}$ be the form in \eqref{eq:discrete-poisson}, and $\tensor{L}\in\mathbb{R}^{n\times n\times\cdots\times n}$ be the $d$th-order tensor corresponding to $\bm{L}$ as defined in \eqref{eq_TTtensor_after_diag}. Then $\bm{L}$ is of the form \eqref{eq:coefficient matrix}, and the singular values of the matricization of $\tensor{L}^{\odot-1}$ decay exponentially.
\end{theorem}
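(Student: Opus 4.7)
The plan is in two stages: first to recognize that $\bm{L}$ fits the template \eqref{eq:coefficient matrix}, and second to verify that the computationally verifiable hypothesis \eqref{eq:cond-1} of \Cref{thm:inverse_displacement} holds so that exponential singular-value decay follows automatically.

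For the first stage, I would take $\bm{M}^{(k)} = \bm{I}_n$ and $\bm{S}^{(k)} = -\bm{\Delta}$ for every $k = 1, \dots, d$. Matching this against \eqref{eq:coefficient matrix} is immediate: the $k$th summand of \eqref{eq:discrete-poisson} is $\bm{I}_n \otimes \cdots \otimes \bm{I}_n \otimes (-\bm{\Delta}) \otimes \bm{I}_n \otimes \cdots \otimes \bm{I}_n$, with $-\bm{\Delta}$ in slot $k$. Joint diagonalizability holds because $-\bm{\Delta}$ is symmetric and admits the explicit discrete sine-transform eigendecomposition $-\bm{\Delta} = \bm{U}^{\top}\,\mathrm{diag}(\mu_1,\dots,\mu_n)\,\bm{U}$ with $\mu_j = \frac{4}{h^2}\sin^{2}\!\left(\frac{j\pi}{2(n+1)}\right)$; the same orthogonal $\bm{U}$ trivially diagonalizes $\bm{I}_n$. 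Setting $\bm{U}_k = \bm{U}$ and $\bm{V}_k = \bm{U}^{\top}$ yields $\lambda_j^{(k)} = 1$ and $\mu_j^{(k)} = \mu_j > 0$ for all $k,j$.

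For the second stage, the key observation is that the ratios appearing in \eqref{eq:cond-1} collapse to
\[
\frac{\mu_{j_s}^{(s)}}{\lambda_{j_s}^{(s)}} \;=\; \mu_{j_s} \;\in\; [\mu_1,\mu_n] \subset \bigl(0,\tfrac{4}{h^2}\bigr),
\]
so they are strictly positive real numbers. Consequently, for every splitting index $k \in \{1,\dots,d-1\}$,
\[
\sum_{s=1}^{k} \frac{\mu_{j_s}^{(s)}}{\lambda_{j_s}^{(s)}} \in [k\mu_1,\,k\mu_n] \subset \mathbb{R}_{>0},
\qquad
-\sum_{s=k+1}^{d} \frac{\mu_{j_s}^{(s)}}{\lambda_{j_s}^{(s)}} \in [-(d-k)\mu_n,\,-(d-k)\mu_1] \subset \mathbb{R}_{<0}.
\]
I would then choose the closed disk $\mathcal{D}_k = \{z : |z-c_k| \le D_k\}$ with
\[
c_k = \tfrac{k(\mu_1+\mu_n)}{2}, \qquad D_k = \tfrac{k(\mu_n-\mu_1)}{2}.
\]
By construction the first set lies inside $\mathcal{D}_k$. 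For the second set, the nearest point to $c_k$ is $-(d-k)\mu_1$, whose distance from $c_k$ is $c_k + (d-k)\mu_1 = \tfrac{k(\mu_1+\mu_n)}{2} + (d-k)\mu_1$, which strictly exceeds $D_k$ since the difference simplifies to $d\mu_1 > 0$. Hence the two sets are disjoint from $\mathcal{D}_k$ in the required sense, so \eqref{eq:cond-1} is satisfied and \Cref{thm:inverse_displacement} yields the exponential decay $\sigma_{j+1}(\bm{X}_{(\mathcal{I}_k)}) \le q^j \sigma_1(\bm{X}_{(\mathcal{I}_k)})$ with $q = D_k/(D_k + d\mu_1) < 1$.

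There is not really a serious obstacle here: the Poisson case is essentially the cleanest one, because the eigenvalues of $-\bm{\Delta}$ are real, positive, and uniformly bounded away from $0$, which makes the two candidate point sets sit on opposite sides of the origin along the real axis. The only detail worth double-checking is that the bound $q < 1$ is independent of $k$ and $d$ in the qualitative sense required by the statement (which it is, since $\mu_1 > 0$ is fixed once $h$ is fixed). Any sharper, $h$- and $d$-uniform rate, or the explicit TT-rank estimate promised by \Cref{coro:TT rank}, can then be obtained by plugging these explicit $D_k$ and $D'_k = d\mu_1$ into the formula $\tau = D/(D+D')$.
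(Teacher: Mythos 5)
Your proof is correct and follows essentially the same route as the paper: identify $\bm{M}^{(k)}=\bm{I}_n$ and $\bm{S}^{(k)}=-\bm{\Delta}$, diagonalize $-\bm{\Delta}$ orthogonally, observe that the ratios $\mu_{j_s}^{(s)}/\lambda_{j_s}^{(s)}$ reduce to the positive eigenvalues $\mu_{j_s}$, pick the interval-covering disk $\mathcal{D}_k$ centered at $\frac{k(\mu_{\min}+\mu_{\max})}{2}$ with radius $\frac{k(\mu_{\max}-\mu_{\min})}{2}$, and check the separation gap $d\,\mu_{\min}>0$ from the second set, which reproduces the paper's decay factor $\frac{k(\mu_{\max}-\mu_{\min})}{k\mu_{\max}+(2d-k)\mu_{\min}}$ (you order the eigenvalues increasingly whereas the paper orders them decreasingly, so the labels $\mu_1,\mu_n$ are swapped, but the bound is the same). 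The only cosmetic addition beyond the paper's argument is your explicit sine-transform formula for the eigenvalues, which the paper does not need or use.
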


\begin{proof}
    Since $-\bm{\Delta}$ is a real symmetric positive definite matrix, there exists an orthogonal matrix $\bm{U}\in\mathbb{R}^{n\times n}$ such that 
    $$\bm{U}^T\left(-\bm{\Delta}\right)\bm{U}=\bm{D},$$
    where $\bm{D} = \text{diag}\left(\mu_1,\mu_2,\ldots,\mu_n\right)$ is a diagonal matrix, and satisfies 
    $$\mu_1\geq\mu_2\geq\cdots\geq\mu_n>0.$$
    It is clear that the matrix $\bm{L}$ is of the form \eqref{eq:coefficient matrix}, and its corresponding tensor $\tensor{L}$ defined in \eqref{eq_TTtensor_after_diag} with entries
    \begin{equation*}
        \tensor{L}({j_1,j_2,\ldots,j_d}) = \sum\limits_{k=1}^d\mu_{j_k}.
    \end{equation*}
    Then by \Cref{thm:displacement}, we know that $\tensor{L}^{\odot-1}$ has $\left(\bm{A}^{(1)},\bm{A}^{(2)},\ldots,\bm{A}^{(d)}\right)$-displacement structure of $\bm{e}\otimes \bm{e}\otimes\cdots\otimes \bm{e}$, where 
    \begin{equation*}
        \bm{A}^{(k)} = \text{diag}\left(\mu_1,\mu_2,\ldots,\mu_n\right),\quad k=1,2,\cdots,d,
    \end{equation*}
    and $\bm{e}\in\mathbb{R}^n$ is a vector whose elements are all 1. For each $k\in\{1,2,\ldots,d-1\}$, let 
    \begin{equation*}
        c = \frac{k\mu_1+k\mu_n}{2}\quad\text{and}\quad D = \frac{k\mu_1-k\mu_n}{2},
    \end{equation*}
    and $\mathcal{D}=\{z:|z-c|\leq D\}$ be a closed disk,
    it is clear that the condition 
    \begin{equation*}
        \left\{\begin{array}{cc}
            \left\{\sum\limits_{s=1}^k\mu_{j_s}:j_s=1,2,\ldots,n;s=1,\ldots,k\right\}\subset\mathcal{D}  \\
             \left\{\sum\limits_{s=k+1}^d-\mu_{j_s}:j_s=1,2,\ldots,n;s=k+1,\ldots,d\right\}\cap\mathcal{D}=\emptyset
        \end{array}\right.
    \end{equation*}
    is met. \Cref{thm:inverse_displacement} illustrates that the singular values of the matricization of $\tensor{L}^{\odot-1}$ decay exponentially, and its corresponding decay factor is 
    $$\frac{k\mu_1-k\mu_n}{k\mu_1+(2d-k)\mu_n} = \frac{k\mu_1/\mu_n-k}{k\mu_1/\mu_n+2d-k},$$
    where $\mu_1/\mu_n$ is the condition number of the matrix $-\bm{\Delta}$.
\end{proof}

\subsection{Boltzmann-BGK equation}
In the second example, we consider the Boltzmann equation in the Bhatnagar-Gross-Krook (BGK) approximation, i.e., the Boltzmann-BGK equation, with the periodic boundary condition in the standard hypercube: 
\begin{equation}\label{eq:2d-bgk}
    \begin{gathered}
        \frac{f(\bm{x},\bm{v},t)}{\partial t}+\bm{v}\cdot\nabla_{\bm{x}}f(\bm{x},\bm{v},t)=\frac{\nu(\bm{x},t)}{\text{Kn}}[f_{\text{eq}}(\bm{x},\bm{v},t)-f(\bm{x},\bm{v},t)],\ (\bm{x},\bm{v})\in[-\pi,\pi]^d\times[-\pi,\pi]^d,
    \end{gathered}
\end{equation}
where $\text{Kn}$ is the Knudsen number, and $f(\bm{x},\bm{v},t)$ represents an unknown probability density function (PDF), which estimates the number of gas particles with velocity $\bm{v}\in\mathbb{R}^d$ at position $\bm{x}\in\mathbb{R}^d$ at time $t\in\mathbb{R}^+$.
The equilibrium PDF has the following form:
\begin{equation*}
    f_{\text{eq}}(\bm{x},\bm{v},t) = \frac{\rho(\bm{x},t)}{(2\pi T(\bm{x},t)/\text{Bo})^{d/2}}\exp\left(-\text{Bo}\frac{\|\bm{v}-\bm{U}(\bm{x},t)\|_2^2}{2T(\bm{x},t)}\right),
\end{equation*}
where $\text{Bo}$ is the Boltzmann number, and the density $\rho(\bm{x},t)$, mean velocity $\bm{U}(\bm{x},t)$, and temperature $T(\bm{x},t)$ are defined as 
\begin{equation*}
    \begin{gathered}
        \rho(\bm{x},t) = \int f(\bm{x},\bm{v},t)d\bm{v},\quad \bm{U}(\bm{x},t) = \frac{1}{\rho(\bm{x},t)}\int \bm{v}f(\bm{x},\bm{v},t)d\bm{v},\\
        T(\bm{x},t) = \frac{\text{Bo}}{d\rho(\bm{x},t)}\int \|\bm{v}-\bm{U}(\bm{x},t)\|_2^2f(\bm{x},\bm{v},t)d\bm{v}.
    \end{gathered}
\end{equation*}
The collision frequency $\nu(\bm{x},t)$ is usually set to
\begin{equation*}
    \nu(\bm{x},t) = \rho(\bm{x},t)KT(\bm{x},t)^{1-\mu},
\end{equation*}
where $K>0$ is a constant and the exponent $\mu$ of the viscosity law depends on the molecular interaction potential and the type of the gas.

We discrete the Boltzmann-BGK equation \eqref{eq:2d-bgk} using the Crank-Nicolson and central difference schemes in the time and spatial directions, respectively. Then in each time evolution, the corresponding linear system can be written as
\begin{equation}\label{eq:discrete-bgk}
    \left(\bm{I}_{n^d}+\frac{\delta t}{2}\bm{L}_{\bm{v}}\right)\bm{f}^{(n+1)}=\left(\bm{I}_{n^d}-\frac{\delta t}{2}\bm{L}_{\bm{v}}\right)\bm{f}^{(n)}+\delta t\frac{\bm{\nu}}{\text{Kn}}\left(\bm{f}_{\text{eq}}^{(n)}-\bm{f}^{(n)}\right),
\end{equation}
where $\delta t$ is the time step size. The matrix $\bm{L}_{\bm{v}}\in\mathbb{R}^{n^{d}\times n^{d}}$ is discretized from the differential operator $\bm{v}\cdot\nabla_{\bm{x}}$ by the central difference scheme, has the following form:
\begin{equation*}
    \bm{L}_{\bm{v}} = \sum\limits_{k=1}^d\bm{I}_{n^2}\otimes\cdots\otimes\bm{I}_{n^2}\otimes\left(\bm{V}^{(k)}\otimes\bm{\nabla}\right)\otimes\bm{I}_{n^2}\otimes\cdots\otimes\bm{I}_{n^2}
\end{equation*}
where $\bm{V}^{(k)}$ is a diagonal matrix whose entries consisting of $\bm{v}_k$ at grid points, and 
\begin{equation*}
    \bm{\nabla} = \frac{1}{h}\left[\begin{array}{ccccc}
       0  & 1 &  &  & -1 \\
        -1 & 0 & \ddots &  &  \\
         & \ddots & \ddots & 1 &  \\
         &  & -1 & 0 & 1 \\
        1 &  &  & -1 & 0 \\
    \end{array}\right]\in\mathbb{R}^{n\times n}
\end{equation*}
is an antisymmetric matrix.

\begin{theorem}\label{thm:boltzmann}
    Let the matrix $\bm{L}$ be the coefficient matrix of the linear system discretized from the Boltzmann-BGK equation, i.e., 
    \begin{equation*}
        \bm{L}=\sum\limits_{k=1}^d\bm{I}_{n^2}\otimes\cdots\otimes\bm{I}_{n^2}\otimes\left(\frac{1}{d}\bm{I}_{n^2}+\frac{\delta t}{2}\bm{V}^{(k)}\otimes\bm{\nabla}\right)\otimes\bm{I}_{n^2}\otimes\cdots\otimes\bm{I}_{n^2},
    \end{equation*}
    and $\tensor{L}\in\mathbb{R}^{n^2\times n^2\times\cdots\times n^2}$ be the $d$th-order tensor corresponding to $\bm{L}$ as defined in \eqref{eq_TTtensor_after_diag}. Then $\bm{L}$ is of the form \eqref{eq:coefficient matrix}, and the singular values of the matricization of $\tensor{L}^{\odot-1}$ decay exponentially when $\delta t/h$ is small enough.
\end{theorem}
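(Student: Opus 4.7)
The plan is to apply Theorem 4.3 after verifying the form \eqref{eq:coefficient matrix}, constructing an explicit joint diagonalization, and exhibiting disks that satisfy condition \eqref{eq:cond-1}.

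First, $\bm{L}$ already fits the form \eqref{eq:coefficient matrix} with $\bm{M}^{(k)}=\bm{I}_{n^2}$ and $\bm{S}^{(k)}=\frac{1}{d}\bm{I}_{n^2}+\frac{\delta t}{2}\bm{V}^{(k)}\otimes\bm{\nabla}$, since the $\frac{1}{d}\bm{I}_{n^2}$ contributions accumulate across the $d$ summands to reproduce the identity term in the original discretization. For joint diagonalization, the key observation is that $\bm{\nabla}$ is a real circulant antisymmetric matrix (inherited from the periodic boundary condition), so the DFT matrix $\bm{F}$ gives $\bm{F}^{*}\bm{\nabla}\bm{F}=\operatorname{diag}(i\omega_1,\ldots,i\omega_n)$ with real $\omega_j=\frac{2}{h}\sin(2\pi j/n)$. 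Choosing $\bm{U}_k=\bm{I}_n\otimes\bm{F}^{*}$ and $\bm{V}_k=\bm{I}_n\otimes\bm{F}$ simultaneously diagonalizes $\bm{M}^{(k)}$ (to $\bm{I}_{n^2}$) and $\bm{S}^{(k)}$, using that $\bm{V}^{(k)}$ is already diagonal. The resulting eigenvalues are $\lambda^{(k)}_{(p,j)}=1$ and $\mu^{(k)}_{(p,j)}=\frac{1}{d}+\frac{i\delta t}{2}v^{(k)}_p\omega_j$, where $v^{(k)}_p$ is the $p$-th grid value of the $k$-th velocity component.

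Next, I would verify condition \eqref{eq:cond-1} of Theorem 4.3. Each ratio $\mu^{(s)}/\lambda^{(s)}$ has real part exactly $1/d$ and purely imaginary second term bounded in modulus by $\pi\delta t/h$, using $|v^{(s)}_p|\leq\pi$ and $|\omega_j|\leq 2/h$. For every cut $k\in\{1,\ldots,d-1\}$, the left-hand set of Theorem 4.3 thus sits on the vertical line $\operatorname{Re}(z)=k/d$ with imaginary spread at most $k\pi\delta t/h$, while the right-hand set sits on the line $\operatorname{Re}(z)=(k-d)/d=k/d-1$. Choosing
\[
\mathcal{D}_k=\left\{z\in\mathbb{C}:|z-k/d|\leq k\pi\delta t/h\right\},
\]
the left-hand set is contained in $\mathcal{D}_k$ by construction, and every point of the right-hand set lies at distance at least $1$ from the center. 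Hence \eqref{eq:cond-1} holds for all $k$ simultaneously as soon as the radius stays strictly below $1$, that is, whenever $\delta t/h<1/((d-1)\pi)$. Theorem 4.3 then yields the claimed exponential decay of the singular values of the matricizations of $\tensor{L}^{\odot-1}$.

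The main obstacle is conceptual rather than computational: the advective kernel $\bm{V}^{(k)}\otimes\bm{\nabla}$ has purely imaginary spectrum, so without the $\frac{1}{d}\bm{I}_{n^2}$ contribution coming from the implicit time step, both sums would lie on the imaginary axis and no disk separation could be engineered. It is precisely the implicit time integrator that imports the fixed real gap of size $1$ between the two sums, and the smallness hypothesis on $\delta t/h$ is then exactly what keeps the imaginary spread from destroying this gap. Once this geometry is identified, the remaining work is bookkeeping around the circulant diagonalization and the two-line trigonometric bounds.
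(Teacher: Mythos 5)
Your proposal is correct and follows essentially the same strategy as the paper: diagonalize $\bm{S}^{(k)}$, observe that the real parts of the ratios $\mu^{(s)}_{j_s}/\lambda^{(s)}_{j_s}$ sum to $k/d$ on the left and $(k-d)/d$ on the right so a unit real gap appears, then pick a disk and invoke Theorem~\ref{thm:inverse_displacement}. The only difference is cosmetic: the paper notes that $\frac{1}{d}\bm{I}_{n^2}+\frac{\delta t}{2}\bm{V}^{(k)}\otimes\bm{\nabla}$ is a normal matrix and argues qualitatively that the disk collapses to $k/d$ as $\delta t/h\to 0$, whereas you use the circulant structure of $\bm{\nabla}$ to get the spectrum explicitly and thereby produce the concrete threshold $\delta t/h<1/((d-1)\pi)$.
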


\begin{proof}
    Since $\bm{\nabla}$ is an antisymmetric matrix, it satisfies
    \begin{equation}\label{eq:nabla}
        \bm{\nabla}^{T}\bm{\nabla} = \bm{\nabla}\bm{\nabla}^{T}
    \end{equation}
Then by \eqref{eq:nabla}, we have
\begin{small}
\begin{equation*}
    \left(\frac{1}{d}\bm{I}_{n^2}+\frac{\delta t}{2}\bm{V}^{(k)}\otimes\bm{\nabla}\right)^{T}\left(\frac{1}{d}\bm{I}_{n^2}+\frac{\delta t}{2}\bm{V}^{(k)}\otimes\bm{\nabla}\right) = \left(\frac{1}{d}\bm{I}_{n^2}+\frac{\delta t}{2}\bm{V}^{(k)}\otimes\bm{\nabla}\right)\left(\frac{1}{d}\bm{I}_{n^2}+\frac{\delta t}{2}\bm{V}^{(k)}\otimes\bm{\nabla}\right)^{T},
\end{equation*}
\end{small}
which means that 
$$\frac{1}{d}\bm{I}_{n^2}+\frac{\delta t}{2}\bm{V}^{(k)}\otimes\bm{\nabla} = \frac{1}{d}\bm{I}_{n^2}+\frac{\delta t}{2h}\bm{V}^{(k)}\otimes \left(h\bm{\nabla}\right)$$ 
is a normal matrix, and is diagonalizable. Clearly, the matrix $\bm{L}$ has the form \eqref{eq:coefficient matrix}. Let $\bm{U}\in\mathbb{C}^{n\times n}$ be a unitary matrix such that 
\begin{equation*}
    \bm{U}^{H}\left(\frac{1}{d}\bm{I}_{n^2}+\frac{\delta t}{2h}\bm{V}^{(k)}\otimes\left(h\bm{\nabla}\right)\right)\bm{U} = \frac{1}{d}+\frac{\delta t}{2h}\bm{D},
\end{equation*}
where $\bm{D}=\text{diag}\left(\mu_1,\mu_2,\ldots,\mu_{n^2}\right)$ is a diagonal matrix consisting of the eigenvalues of $\bm{V}^{(k)}\otimes\left(h\bm{\nabla}\right)$.
Then the corresponding tensor $\tensor{L}$ with entries
\begin{equation*}
    \tensor{L}({j_1,j_2,\ldots,j_d}) = 1 + \frac{\delta t}{2h}\sum\limits_{k=1}^d\mu_{j_k}.
\end{equation*}
Similarly, \Cref{thm:displacement} illustrates that $\tensor{L}^{\odot-1}$ has $\left(\bm{A}^{(1)},\bm{A}^{(2)},\ldots,\bm{A}^{(d)}\right)$-displacement structure of $\bm{e}\otimes\bm{e}\otimes\cdots\otimes\bm{e}$, where
\[
\bm{A}^{(k)}=\text{diag}\left(\frac{1}{d}+\frac{\delta t\mu_1}{2h},\frac{1}{d}+\frac{\delta t\mu_2}{2h},\ldots,\frac{1}{d}+\frac{\delta t\mu_{n^2}}{2h}\right),\quad k=1,2,\ldots,d,
\]
and $\bm{e}\in\mathbb{R}^{n^2}$ is a vector whose elements are all 1. Let $\mu_{\text{min},1},\ \mu_{\text{max},1}$ and $\mu_{\text{min},2},\ \mu_{\text{max},2}$ be the smallest and largest values of the real and imaginary parts of $\left\{\mu_1,\mu_2,\ldots,\mu_{n^2}\right\}$, we remark that they are bounded by a constant that only depends on the computational domain. Now we define a closed disk as $\mathcal{D}_{\delta t/h}=\{z:|z-c|\leq D\}$ with
\begin{equation*}
    c = \frac{k}{d}+\frac{k\delta t\mu_{\text{min},1}+k\delta t\mu_{\text{max},1}}{4h} + i\frac{k\delta t\mu_{\text{min},2}+k\delta t\mu_{\text{max},2}}{4h}
\end{equation*}
and
\begin{equation*}
    D = \sqrt{\left(\frac{k\delta t\mu_{\text{max},1}-k\delta t\mu_{\text{min},1}}{4h}\right)^2+\left(\frac{k\delta t\mu_{\text{max},2}-k\delta t\mu_{\text{min},2}}{4h}\right)^2}.
\end{equation*}
According to the definition of $\mathcal{D}_{\delta t/h}$, we know that it tends to the point $k/d$ when $\delta t/h$ tends to 0, and the set 
\[
\left\{\sum\limits_{s=k+1}^d-\left(\frac{1}{d}+\frac{\delta t}{2h}\mu_{j_s}\right):j_s=1,2,\ldots,n^2;s=k+1,\ldots,d\right\}
\]
tends to the point $(k-d)/d$. Clearly, the condition
\begin{equation*}
    \left\{\begin{array}{c}
         \left\{\sum\limits_{s=1}^k\frac{1}{d}+\frac{\delta t}{2h}\mu_{j_s}:j_s=1,2,\ldots,n^2;s=1,\ldots,k\right\} \subset\mathcal{D} \\
          \left\{\sum\limits_{s=k+1}^d-\left(\frac{1}{d}+\frac{\delta t}{2h}\mu_{j_s}\right):j_s=1,2,\ldots,n^2;s=k+1,\ldots,d\right\}\cap\mathcal{D} = \emptyset
    \end{array}\right.
\end{equation*}
is satisfied when $\delta t/h$ is small enough. Then by \Cref{thm:inverse_displacement}, we know that the singular values of the matricization of $\tensor{L}^{\odot-1}$ decay exponentially, and its corresponding decay factor approximates to 
$$\frac{k\delta t/h\sqrt{\left(\mu_{\text{max},1}-\mu_{\text{min},1}\right)^2+\left(\mu_{\text{max},2}-\mu_{\text{min},2}\right)^2}}{4+k\mu_{\text{max},1}\delta t/h+2k\mu_{\text{min},1}\delta t/h}.$$
\end{proof}

\subsection{Fokker-Planck equation}
The third example we consider is the $d$-dimensional Fokker-Planck equation with the open boundary condition, i.e., 
\begin{equation}\label{eq:d-fp}
\begin{gathered}
    \frac{\partial\rho(\bm{x},t)}{\partial t}=\nabla\cdot\left(\rho(\bm{x},t)\bm{A}\bm{x}\right)+\nabla\cdot\left(\nabla\cdot\left(\rho(\bm{x},t)\bm{D}\right)\right),\ (\bm{x}, t) \in \mathbb{R}^d \times \mathbb{R}^+,
\end{gathered}
\end{equation}
where $\rho(\bm{x},t)$ is the unknown probability density function (PDF) of $\bm{x}$ at time $t$ with the initial PDF $\rho_0(\bm{x})$. 
We refer to \cite{chertkov2021solution} to set $\bm{A} = \bm{I}_d$ and $\bm{D}=\frac{1}{2}\bm{I}_d$, which is a special case of Ornstein-Uhlenbeck process (OUP). Here we also use the Crank-Nicolson and central difference schemes to discrete the Fokker-Planck equation \eqref{eq:d-fp}. Then the corresponding linear system in each time evolution is as follows:
\begin{equation}\label{eq:discrete-fp}
    \left(\bm{I}_{n^d} - \frac{\delta t}{2}\bm{L}_{\bm{x}}\right)\bm{\rho}^{(n+1)} = \left(\bm{I}_{n^d}+\frac{\delta t}{2}\bm{L}_{\bm{x}}\right)\bm{\rho}^{(n)},
\end{equation}
where $\bm{L}_{\bm{x}}$ is the discrete form of $d+\bm{x}\cdot\nabla+\frac{1}{2}\Delta$. More specifically,
\begin{equation*}
    \bm{L}_{\bm{x}} = \sum\limits_{k=1}^d\bm{I}_n\otimes\cdots\otimes\bm{I}_n\otimes\bm{L}^{(k)}_{\bm{x}}\otimes\bm{I}_n\otimes\cdots\otimes\bm{I}_n
\end{equation*}
where $\bm{L}^{(k)}_{\bm{x}} = \bm{I}_n + \bm{X}^{(k)}\bm{\nabla}+\frac{1}{2}\bm{\Delta}$ and $\bm{X}^{(k)}$ is a diagonal matrix whose entries consisting of $\bm{x}_k$ at grid points. Since the open boundary condition can be approximated by the Dirichlet boundary condition on a hypercube \cite{risken1996fokker}, we have
\begin{equation*}\label{eq:adv-diffusion}
    \bm{\nabla} = \frac{1}{h}\left[\begin{array}{ccccc}
       0  & 1 &  &  &  \\
        -1 & 0 & \ddots &  &  \\
         & \ddots & \ddots & 1 &  \\
         &  & -1 & 0 & 1 \\
         &  &  & -1 & 0 \\
    \end{array}\right]\quad \text{and}\quad \bm{\Delta} =\frac{1}{h^2}\left[\begin{array}{ccccc}
       -2  & 1 &  &  &  \\
        1 & -2 & \ddots &  &  \\
         & \ddots & \ddots & -1 &  \\
         &  & 1 & -2 & 1 \\
         &  &  & 1 & -2 \\
    \end{array}\right],
\end{equation*} 
which are respectively antisymmetric and symmetric matrices.

\begin{theorem}\label{thm:fp}
    Let the matrix $\bm{L}$ be the coefficient matrix of the linear system discretized from the Fokker-Planck equation, i.e., 
   \begin{equation*}
    \bm{L} = \sum\limits_{k=1}^{d}\bm{I}\otimes\cdots\otimes\bm{I}\otimes\left(\left(\frac{1}{d} - \frac{\delta t}{2}\right)\bm{I} - \frac{\delta t}{2}\bm{X}^{(k)}\bm{\nabla}-\frac{\delta t}{4}\bm{\Delta}\right)\otimes\bm{I}\otimes\cdots\otimes\bm{I},
\end{equation*}
    and $\tensor{L}\in\mathbb{R}^{n\times n\times\cdots\times n}$ be the $d$th-order tensor corresponding to $\bm{L}$ as defined in \eqref{eq_TTtensor_after_diag}. Then the matrix $\bm{L}$ has the form \eqref{eq:coefficient matrix}, and the singular values of the matricization of $\tensor{L}^{\odot-1}$ decay exponentially when the mesh size $h$ and $\delta t/h$ is small enough.
\end{theorem}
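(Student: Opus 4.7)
The plan is to proceed in the same two-step structure used in the proofs of \Cref{thm:poisson} and \Cref{thm:boltzmann}: first, verify that $\bm{L}$ has the Kronecker-sum form \eqref{eq:coefficient matrix} with jointly diagonalizable factors, so that \Cref{thm:displacement} applies; second, identify a closed disk in $\mathbb{C}$ that satisfies the sufficient condition \eqref{eq:cond-1} of \Cref{thm:inverse_displacement}, which in turn forces the singular values of every matricization of $\tensor{L}^{\odot-1}$ to decay exponentially.

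For the first step, because $\bm{M}^{(k)}=\bm{I}$ trivially commutes with anything and is diagonal in every basis, the joint diagonalization requirement reduces to showing that $\bm{S}^{(k)}=\bigl(\tfrac{1}{d}-\tfrac{\delta t}{2}\bigr)\bm{I}-\tfrac{\delta t}{2}\bm{X}^{(k)}\bm{\nabla}-\tfrac{\delta t}{4}\bm{\Delta}$ is diagonalizable. I would argue this by perturbation: the matrix $-\tfrac{\delta t}{4}\bm{\Delta}+\bigl(\tfrac{1}{d}-\tfrac{\delta t}{2}\bigr)\bm{I}$ is symmetric with a simple spectrum (the eigenvalues of the one-dimensional Dirichlet Laplacian are $-\tfrac{4}{h^2}\sin^2\bigl(\tfrac{j\pi}{2(n+1)}\bigr)$), so its eigenvalues are well separated. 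The advection piece $-\tfrac{\delta t}{2}\bm{X}^{(k)}\bm{\nabla}$ has spectral norm of order $\delta t/h$, while the minimum gap of the diffusion part is of order $\delta t/h^2$; hence when $\delta t/h$ is small compared to $1/h$ (equivalently $\delta t/h^2$ is small, guaranteed by the stated hypothesis), the Bauer--Fike / holomorphic perturbation theorem keeps the spectrum simple, so $\bm{S}^{(k)}$ is diagonalizable. Writing $\bm{U}_k\bm{S}^{(k)}\bm{V}_k=\bm{D}_k=\text{diag}(\mu_{j_k}^{(k)})$ with $\bm{U}_k\bm{V}_k=\bm{I}$, the eigenvalues are perturbations of $\tfrac{1}{d}$ of size bounded by $C(\delta t/h+\delta t/h^2)$ for some $C$ depending only on the computational domain.

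For the second step, \Cref{thm:displacement} gives $\tensor{L}^{\odot-1}$ a displacement structure of the rank-one tensor $\bm{e}\otimes\cdots\otimes\bm{e}$, with diagonal normal matrices $\bm{A}^{(k)}=\bm{D}_k$. Fix any $k\in\{1,\ldots,d-1\}$. Since every $\mu_{j_k}^{(k)}$ lies in a disk of radius $O(\delta t/h+\delta t/h^2)$ centered at $1/d$, the set $\bigl\{\sum_{s=1}^{k}\mu_{j_s}^{(s)}\bigr\}$ lies in a closed disk $\mathcal{D}=\{z:|z-k/d|\le D\}$ with $D=O(\delta t/h+\delta t/h^2)$, while the set $\bigl\{-\sum_{s=k+1}^{d}\mu_{j_s}^{(s)}\bigr\}$ clusters around $-(d-k)/d$, at distance at least $1-O(\delta t/h+\delta t/h^2)$ from the center of $\mathcal{D}$. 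Thus for $h$ and $\delta t/h$ small enough, the two sets are separated, condition \eqref{eq:cond-1} holds, and \Cref{thm:inverse_displacement} yields exponential decay of the singular values of $\bm{X}_{(\mathcal I_k)}$ with decay rate of order $D/(1+D)$.

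The main obstacle I anticipate is the diagonalizability argument in step one: unlike the Boltzmann-BGK case where $\bm{X}^{(k)}\otimes\bm{\nabla}$ is normal because $\bm{\nabla}$ is antisymmetric and acts in a separate Kronecker factor from the diagonal $\bm{X}^{(k)}$, here $\bm{X}^{(k)}$ and $\bm{\nabla}$ act on the same factor and do not commute, so $\bm{X}^{(k)}\bm{\nabla}$ is neither symmetric nor normal. A cleaner alternative is to use the Fokker--Planck symmetrization based on the OUP stationary density $\rho_\infty(\bm{x})\propto e^{-|\bm{x}|^2}$: with $\bm{W}^{(k)}=\text{diag}\bigl(\sqrt{\rho_\infty(\bm{x}_i)}\bigr)$ one expects $(\bm{W}^{(k)})^{-1}\bm{S}^{(k)}\bm{W}^{(k)}$ to be (close to) symmetric in the sense of detailed balance, making $\bm{S}^{(k)}$ similar to a symmetric matrix and hence diagonalizable with real spectrum; verifying this discrete identity carefully, or else implementing the perturbation argument rigorously with the correct scaling of $\delta t$ relative to $h$, is the technical heart of the proof, but once it is in place the remaining calculations parallel those in the proofs of \Cref{thm:poisson} and \Cref{thm:boltzmann}.
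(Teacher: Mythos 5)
Your overall two-step structure matches the paper's, and the second step (disks, displacement structure, Zolotarev bound) is essentially the same calculation. The gap is in the first step, and it is the step you yourself flag as the technical heart: establishing that
\[
\bm{S}^{(k)}=\Bigl(\tfrac{1}{d}-\tfrac{\delta t}{2}\Bigr)\bm{I}-\tfrac{\delta t}{2}\bm{X}^{(k)}\bm{\nabla}-\tfrac{\delta t}{4}\bm{\Delta}
\]
is diagonalizable. The perturbation route you sketch does not go through. The consecutive eigenvalue gaps of $-\tfrac{\delta t}{4}\bm{\Delta}$ are \emph{not} of order $\delta t/h^2$: the spectrum spans $[\,O(\delta t),\,O(\delta t/h^2)\,]$ but contains $n\approx 1/h$ eigenvalues, and near the bottom the spacing is only $O(\delta t)$ (for the Dirichlet Laplacian on a fixed interval, $\frac{4}{h^2}\bigl[\sin^2\tfrac{(j{+}1)\pi}{2(n+1)}-\sin^2\tfrac{j\pi}{2(n+1)}\bigr]=O(j)$ for fixed small $j$). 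Meanwhile the advection perturbation $-\tfrac{\delta t}{2}\bm{X}^{(k)}\bm{\nabla}$ has spectral norm $O(\delta t/h)$, which dwarfs the $O(\delta t)$ minimum gap as $h\to 0$. So Bauer--Fike cannot keep the spectrum simple, and holomorphic perturbation of eigenprojections is not available either since the unperturbed spectrum is not uniformly separated on the relevant scale.

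The paper closes this hole with a completely different and exact argument, not a perturbative one. Factoring out $\tfrac{1}{2h}$, the operator $\tfrac12\bm{X}^{(k)}\bm{\nabla}+\tfrac14\bm{\Delta}$ equals $\tfrac{1}{2h}\bigl(-\tfrac1h\bm{I}_n+\bm{A}\bigr)$ where $\bm{A}$ is tridiagonal with zero diagonal, super-diagonal entries $x_i^{(k)}+\tfrac{1}{2h}$ and sub-diagonal entries $-x_{i+1}^{(k)}+\tfrac{1}{2h}$. Because the grid values $x_i^{(k)}$ are uniformly bounded by the size of the computational domain, once $h$ is small enough \emph{both} off-diagonal entries in each symmetrically placed pair are strictly positive, so their product is positive. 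This is exactly the sign condition defining a Jacobi (oscillation) matrix in the sense of Gantmacher--Krein: such a tridiagonal matrix is similar, via a positive diagonal scaling, to a real symmetric tridiagonal matrix with nonzero off-diagonal entries and therefore has $n$ real, \emph{distinct} eigenvalues. That gives diagonalizability unconditionally for small $h$ — no gap estimate, no Bauer--Fike. Your detailed-balance suggestion is actually pointing in this direction (the diagonal similarity that symmetrizes a Jacobi matrix plays the role of the square-root stationary-density conjugation), but you neither identify the tridiagonal positivity criterion nor carry out the symmetrization, and without it the diagonalizability step — and hence the applicability of \Cref{thm:displacement} — is unproved. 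Once this is repaired, the rest of your disk argument is fine, modulo the observation that the resulting disk radius scales like $\delta t/h$ (the eigenvalues of $\bm{A}$ itself are $O(1/h)$, not $O(1)$), consistent with the decay factor the paper derives.
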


\begin{proof}
The key to proving that $\bm{L}$ is of the form \eqref{eq:coefficient matrix} is to show that the matrix $\frac{1}{2}\bm{X}^{(k)}\bm{\nabla}+\frac{1}{4}\bm{\Delta}$ is diagonalizable. According to the explicit form of $\bm{X}^{(k)}$, $\bm{\nabla}$, and $\bm{\Delta}$, we can rewrite the matrix $\frac{1}{2}\bm{X}^{(k)}\bm{\nabla}+\frac{1}{4}\bm{\Delta}$ as
$\frac{1}{2h}\left(-\frac{1}{h}\bm{I}_n+\bm{A}\right)$,
where the matrix $\bm{A}\in\mathbb{R}^{n\times n}$ has the following form:
\begin{small}
\begin{equation*}
    \left[\begin{array}{ccccc}
        0 & x_0^{(k)}+h+\frac{1}{2h} & &  &  \\
        -x_0^{(k)}-2h+\frac{1}{2h} & 0 & \ddots &  & \\
         & \ddots & \ddots & x_0^{(k)}+(n-2)h+\frac{1}{2h} & \\
         & & -x_0^{(k)}-(n-1)h+\frac{1}{2h} & 0 & x_0^{(k)}+(n-1)h+\frac{1}{2h}\\
         & & & -x_0^{(k)}-nh+\frac{1}{2h} & 0 \\
    \end{array}\right],
\end{equation*}
\end{small}
and $x_0^{(k)}$ is the left endpoint of the computational domain in the $k$th dimension. Therefore, we only need to check whether $\bm{A}$ is diagonalizable. Since the set $\left\{x_0^{(k)}+ih:i=1,2,\ldots,n\right\}$ is bounded by a constant that does not depend on $n$ and $h$, both 
\begin{equation*}
    -x_0^{(k)}-ih+\frac{1}{2h}\quad\text{and}\quad x_0^{(k)}+(i-1)h+\frac{1}{2h}
\end{equation*}
are larger than 0 when the mesh size $h$ is small enough. Clearly, the following inequality
\begin{equation}\label{eq:fp-a}
    \left(-x_0^{(k)}-ih+\frac{1}{2h}\right)\cdot\left(x_0^{(k)}+(i-1)h+\frac{1}{2h}\right)>0
\end{equation}
holds for all $i=2,\ldots,n$. As referred in \cite{gantmacher2002oscillation} (pp. 67-69), the eigenvalues of $\bm{A}$ are real and distinct, which implies that $\bm{A}$ is diagonalizable.

Let $\bm{U}\in\mathbb{R}^{n\times n}$ be an invertible matrix such that 
$\bm{U}^{-1}\bm{A}\bm{U}=\bm{D}$, where $\bm{D} = \text{diag}\left(\mu_1,\mu_2,\ldots,\mu_n\right)$ is a 
diagonal matrix and satisfies
$$\mu_1>\mu_2>\cdots>\mu_n.$$
Then the corresponding tensor $\tensor{L}$ with entries 
\begin{equation*}
    \tensor{L}({j_1,j_2,\ldots,j_d}) =  1 - \frac{d\delta t}{2}-\frac{d\delta t}{2h^2} - \frac{\delta t}{2h}\sum\limits_{k=1}^d\mu_{j_k}.
\end{equation*}
Similar to the proof of \Cref{thm:boltzmann}, we can also prove that the singular values of the matricization of $\tensor{L}^{\odot-1}$ decay exponentially when $\delta t/h$ is small enough, and its corresponding decay factor is 
$$\frac{d(\mu_1-\mu_n)\delta t/h}{4d-2d\delta t-2d\delta t/h^2-(k\mu_1+(2d-k)\mu_n)\delta t/h}.$$
\end{proof}



\begin{remark}\label{rmk:discussion}
    The effects of the mesh size $h$ and the time step size $\delta t$ on solving PDEs can be characterized by the TT rank of their corresponding inverse matrices. 
\begin{itemize}
    \item For the Poisson equation, we know that the condition number of its corresponding coefficient matrix grows quadratically with $1/h$, and \Cref{thm:poisson} illustrates that the decay factor is positively correlated with the condition number. In other words, the TT rank of $\tensor{L}^{\odot-1}$ derived from the Poisson equation will increase as the mesh is refined, which results in higher computational costs. 
    \item For the Boltzmann and Fokker-Planck equations, 
    \Cref{thm:boltzmann} and \ref{thm:fp} illustrate that the corresponding decay factor is positively correlated with $\delta t/h$. Interestingly, the stability of discretization schemes can also be measured by $\delta t/h$, which implies that there is a potential connection between it and the TT rank of $\tensor{L}^{\odot-1}$ derived from the Boltzmann and Fokker-Planck equations.
\end{itemize}
\end{remark}
\section{Numerical experiments}\label{sec:example}

In this section, we demonstrate the advantages of the proposed TT-based matrix inversion method in solving PDEs analyzed in \Cref{sec:application} from the perspective of numerical experiments. For comparison, we also implement a matrix inversion method without TT representations, called the baseline method.
In the TT-based matrix inversion method, the implementations of the involved operations on TT formats are based on TT-Toolbox \cite{oseledets2019matlab}. If there is no special instruction, the error tolerances in \Cref{algo:newton} and TT-rounding are set to $1.0\times10^{-6}$ and $1.0\times10^{-8}$, respectively. To measure the accuracy of the TT-based matrix inversion method in solving PDEs, we define the relative error as follows:
\begin{equation*}
    \text{Relative error} = \frac{\|\bm{u}_{\text{tt}}-\bm{u}_{\text{ref}}\|_2}{\|\bm{u}_{\text{ref}}\|_2},
\end{equation*}
where $\bm{u}_{\text{tt}}$ is the solution obtained by the TT-based method, and $\bm{u}_{\text{ref}}$ is the reference solution.
All numerical experiments are conducted in MATLAB R2021b on a laptop equipped with an Intel Core i7-11390H CPU of 3.40 GHz.

\subsection{3D Poisson equation}

The first example is a 3D Poisson equation in the hypercube $[-1,1]^3$ with the Dirichlet boundary condition \cite{wang2024solving}:
\begin{equation}\label{eq:poisson-eq}
    \begin{gathered}
        -\Delta u(\bm{x}) = 6\pi^2\sum\limits_{k=1}^3\sin(2\pi x_k)\cdot\prod\limits_{i\neq k}^{3}\sin(\pi x_i),\ \bm{x}\in[-1,1]^3,\\
        u(\bm{x}) = 0,\ \bm{x}\in\partial[-1,1]^3,
    \end{gathered}
\end{equation}
whose exact solution is represented as
\[
u_*(\bm{x}) = \sum\limits_{k=1}^3\sin(2\pi x_k)\cdot\prod\limits_{i\neq k}\sin(\pi x_i).
\]
In this example, we set the number of grids in each dimension $n$ to $256$, $512$, and $1024$, respectively. \Cref{fig:poisson-1} illustrates the distribution of the singular values of the matricization of the tensor $\tensor{L}^{\odot-1}$ with different numbers of girds. From this figure, we observe that the singular values indeed decay exponentially, which means that the inverse of the coefficient matrix of the linear system discretized from the 3D Poisson equation \eqref{eq:poisson-eq} can be approximated well by a low-rank TT tensor. This is consistent with the theory in \Cref{thm:poisson}.

\begin{figure}[!htb]
    \centering
    \includegraphics[width=1.0\linewidth]{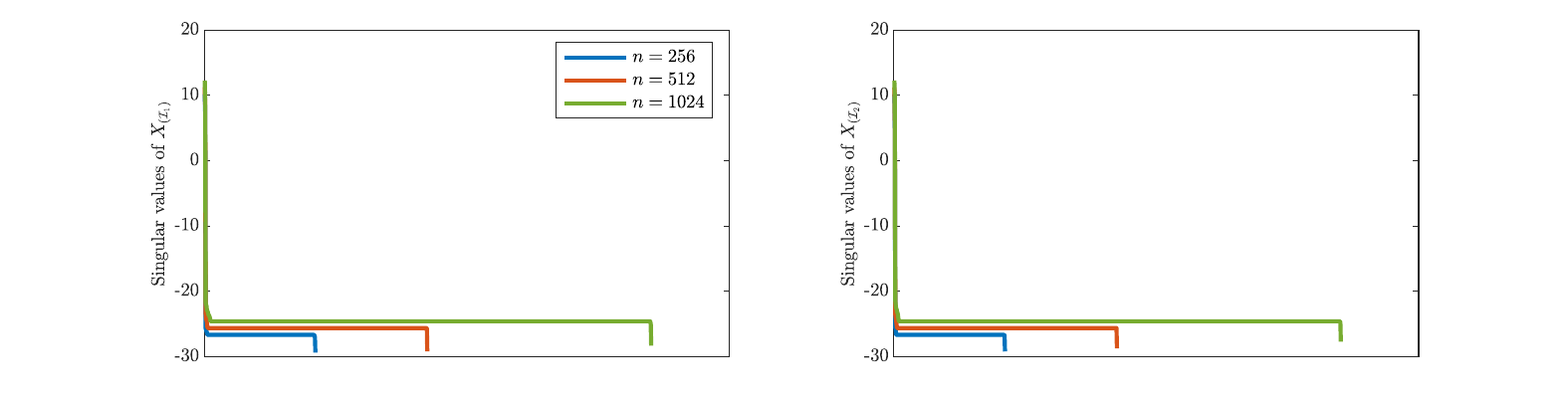}
    \caption{Distribution of the singular values of the matricization of the tensor $\tensor{L}^{\odot-1}$ with varying $n$ (the number of grids in each dimension).}
    \label{fig:poisson-1}
\end{figure}

\Cref{table:poisson} shows the comparison results of the baseline and TT-based matrix inversion methods in terms of computational efficiency and accuracy in solving the 3D Poisson equation \eqref{eq:poisson-eq}. 
We remark that the memory cost of the baseline method grows cubically with $n$, only the results with $n=256$ and $n=512$ are shown in \Cref{table:poisson} due to the limited memory. However, the proposed TT-based matrix inversion method can still obtain the solution corresponding to $n=1024$ with low time and memory costs, which illustrates its advantages in large-scale problems. In terms of accuracy, we can also see that the proposed TT-based matrix inversion method is able to achieve the same accuracy as the baseline method, and does not break the the convergence order of the central difference scheme used in this example. \Cref{fig:poisson-2} shows the solution obtained by the TT-based matrix inversion method is visually indistinguishable from the exact solution, which further demonstrates the accuracy of this method.
Moreover, we also observe that the TT rank of $\tensor{L}^{\odot-1}$ derived from the 3D Poisson equation grows as $n$ increases, which matches the theory in \Cref{thm:poisson}. It is worth mentioning that it reflects from another perspective that as the mesh is refined, solving the linear system corresponding to the Poisson equation will become more difficult due to the increase of the condition number.

\begin{table}[!htb]
\centering
\caption{Running times (unit: seconds), relative errors, and averaged TT ranks of the TT-based matrix inversion method for solving the 3D Poisson equation with varying $n$ (the number of grids in each dimension).}
\begin{tabular}{c|c|ccc}
\toprule
\multicolumn{2}{c|}{Algorithms}  & $n=256$ & $n=512$ & $n=1024$ \\
\midrule

\multicolumn{1}{c|}{\multirow{2}{*}{Baseline method}} & Running time & 1.00 & 10.20 & \multicolumn{1}{c}{\multirow{2}{*}{Out of memory}} \\
 & Relative error & $1.51\times10^{-4}$  & $3.77\times10^{-5}$ &  \\
\midrule         
\multicolumn{1}{c|}{\multirow{3}{*}{TT-based method}} &   Running time & 14.14 & 71.40 & 475.05  \\
 & Relative error   & $1.50\times10^{-4}$ & $3.37\times10^{-5}$ & $6.55\times10^{-6}$   \\
 & Averaged TT rank & 25 & 31 & 39 \\
\bottomrule
\end{tabular}
\label{table:poisson}
\end{table}

\begin{figure}[!htb]
    \centering
    \includegraphics[width=1.0\linewidth]{./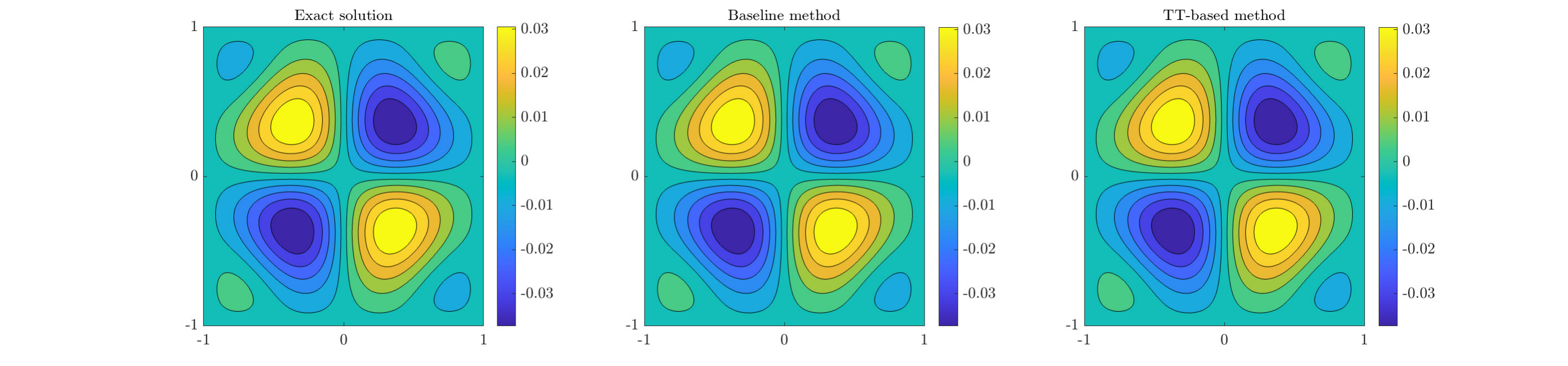}
    \caption{Solutions of the baseline and TT-based matrix inversion methods in the hyper-plane $(x_1,x_2,0)$ with $n=512$.}
    \label{fig:poisson-2}
\end{figure}

\subsection{2D2V Boltzmann-BGK equation}

The second example is a 2D2V Boltzmann-BGK equation from \cite{li2024solving} with the periodic boundary condition. We set the computational domain to $[-\pi,\pi]^2\times[-\pi,\pi]^2$, and the initial distribution $f(\bm{x},\bm{v},0)$ is taken as the Maxwellian distribution (i.e., the equilibrium distribution $f_{\text{eq}}(\bm{x},\bm{v},0)$) with macroscopic variables given by 
\begin{equation*}
    \rho(\bm{x},0) = 1+0.5\prod\limits_{k=1}^2\sin(x_k),\quad \bm{U}(\bm{x},0) = \bm{0},\quad \text{and}\quad T(\bm{x},0) = 1.
\end{equation*}
Other parameters of this example are presented in \Cref{table:boltzmann}. 

\begin{table}[!htb]
    \centering
    \caption{Parameter setting of the 2D2V Boltzmann-BGK equation.}
    \begin{tabular}{ccc}
      \toprule
      Variable & Value & Description \\
      \midrule
      $\mathrm{K}$ & 1.0 & Collision frequency pre-factor \\
      $\mu$ & 0.5 & Collision frequency temperature exponent \\
      $\mathrm{Kn}$ & 1 & Knudsen number \\
      $\mathrm{Bo}$ & 3.65 & Boltzmann number \\
      $\delta t$ & 0.00125, 0.0025, 0.005 & Time step size\\
      $n$ & 128 & Number of grids in each dimension \\
      \bottomrule
    \end{tabular}
    \label{table:boltzmann}
\end{table}

Since there is no exact solution in this example, we use the solution corresponding to $n=256$ as the reference solution.
For $t=1.0$s, the relative errors of the solutions obtained by the TT-based matrix inversion method with respect to $n$ are shown on the left side of \Cref{fig:boltzmann-1}. From it, we observe that the convergence order of the TT-based matrix inversion method is 2, which matches the second-order discrete scheme for the spatial direction. Also, the TT ranks shown on the right side of \Cref{fig:boltzmann-1} illustrate that the tensor $\tensor{L}^{\odot-1}$ derived from the 2D2V Boltzmann-BGK equation enjoys a low-rank structure in TT format. We remark that, the TT rank grows as the time step size increases, this is because the decay factor given in \Cref{thm:boltzmann} increases with respect to $\delta t/h$.

\begin{figure}[!htb]
    \centering
    \includegraphics[width=1.00\linewidth]{./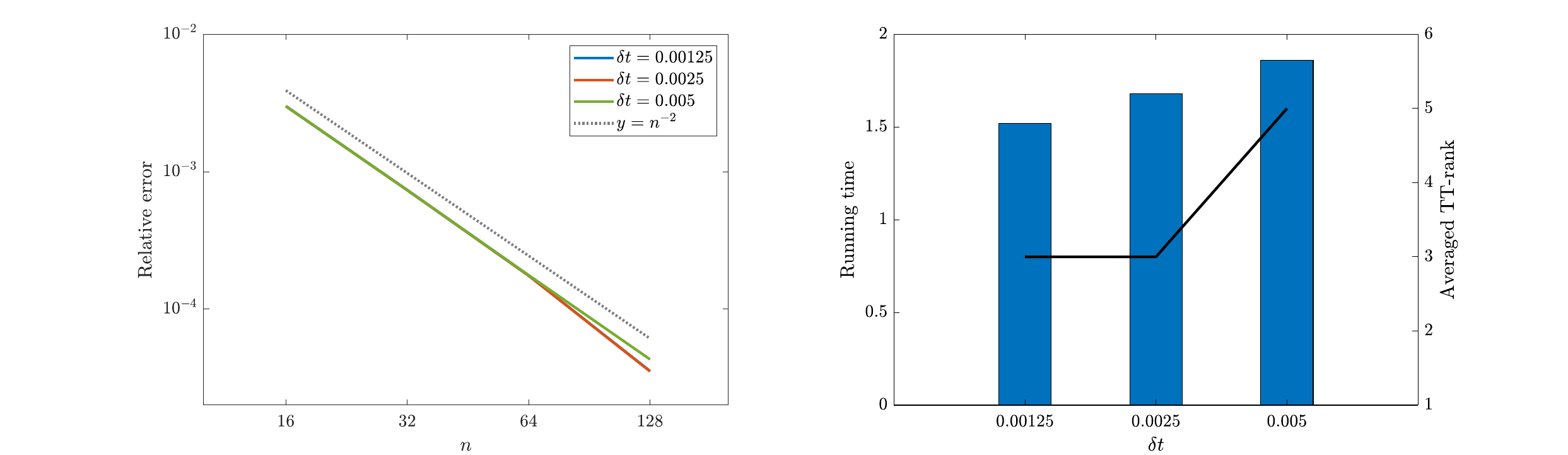}
    \caption{Numerical results of the TT-based matrix inversion method for solving the 2D2V Boltzmann-BGK equation with different time step sizes. Left: Relative errors with respect to $n$ at $t=1.0$s. Right: Running times (unit: seconds) and averaged TT ranks.}
    \label{fig:boltzmann-1}
\end{figure}

\begin{figure}[!htb]
    \centering
    \includegraphics[width=1.00\linewidth]{./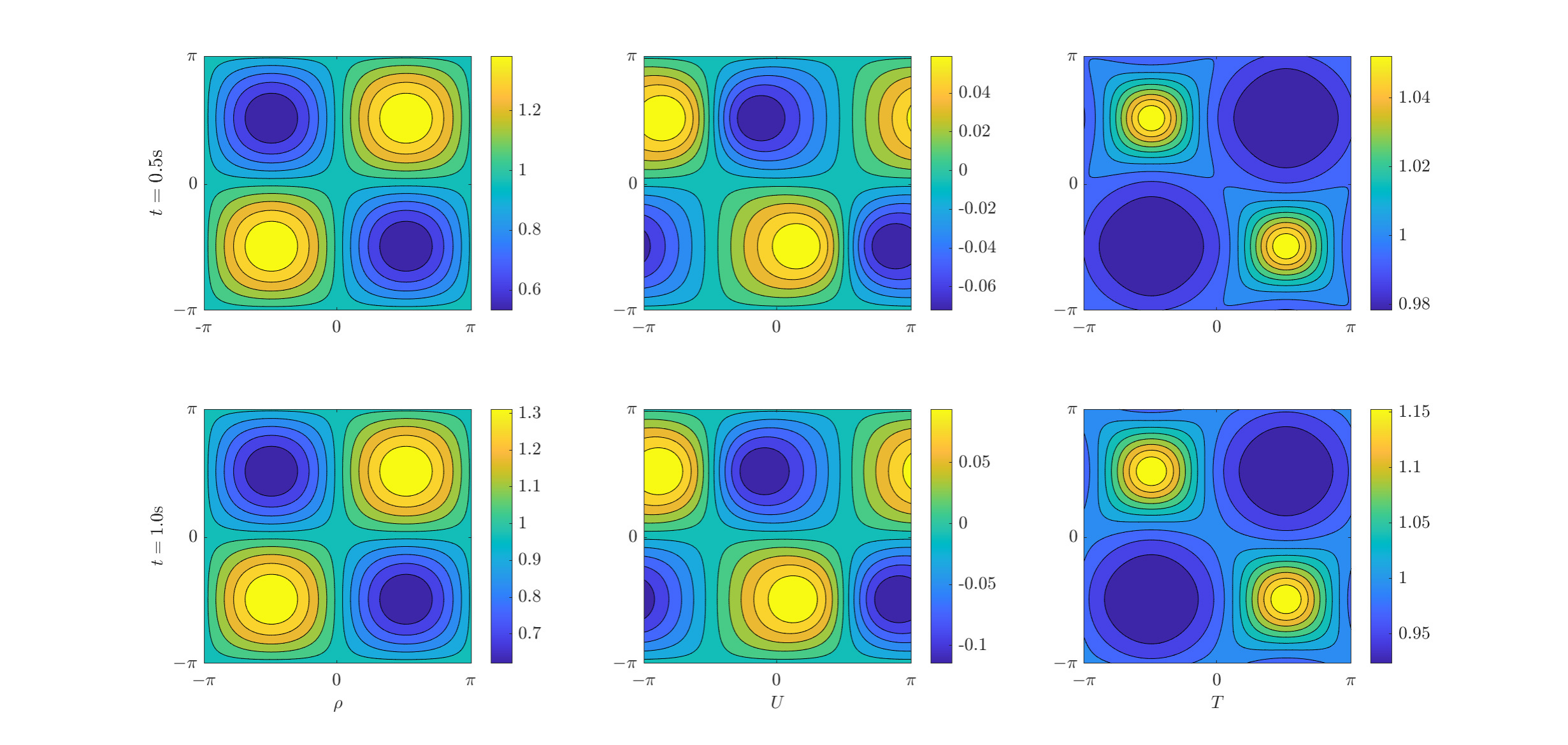}
    \caption{Macroscopic density $\rho(\bm{x},t)$, velocity $\bm{U}(\bm{x},t)$, and temperature $T(\bm{x},t)$ of the TT-based matrix inversion method with the time step size $\delta t=0.0025$ at $t=0.5$s and $1.0$s, respectively.}
    \label{fig:boltzmann-2}
\end{figure}

\subsection{Fokker-Planck equation}

The third example is to test the performance of the proposed TT-based matrix inversion method on the Fokker-Planck equation of different dimensions. Here the dimensions are taken as $3,\ 6,\ 9$, and $12$, and we set the initial distribution $\rho(\bm{x},0)$ to $\frac{1}{{(2\pi)^{d/2}}}e^{-\frac{\|\bm{x}\|^2_2}{2}}$, its corresponding exact solution can be represented as
\begin{equation}
    \rho(\bm{x},t) = \frac{1}{\left(\pi\sigma(t)\right)^{d/2}}e^{-\frac{\|\bm{x}\|_2^2}{\sigma(t)}}\quad \text{with}\quad {\sigma}(t) = {1+e^{-2t}}.
\end{equation}
Due to the open boundary condition of the Fokker-Planck equation, we refer to \cite{chertkov2021solution} to truncate the computational domain to $[-5,5]^d$, and the values of $\rho(\bm{x},t)$ on the boundary are set to 0. For the Fokker-Planck equation of different dimensions, the number of grids in each dimension $n$ and time step size $\delta t$ are set to $256$ and $0.00125$, respectively. To ensure the accuracy of the TT-based matrix inversion method in solving high-dimensional Fokker-Planck equations, we here set the error tolerances in \Cref{algo:newton} and TT-rounding to $1.0\times10^{-10}$ and $1.0\times10^{-12}$, respectively.

From \Cref{fig:fp-1}, we observe that for the Fokker-Planck equation of different dimensions, the relative errors obtained by the TT-based matrix inversion method do not accumulate with respect to $t$, so introducing TT representation will not break the stability of the used discrete scheme. 
Additionally, \Cref{fig:fp-1} shows that the tensor $\tensor{L}^{\odot-1}$ derived from the Fokker-Planck equation is well-matched to the TT format. Indeed, as referred in \Cref{sec:algorithm}, the time cost of the TT-based matrix inversion method increases linearly with the dimension $d$ due to the low-rank property of $\tensor{L}^{\odot-1}$ in TT format. \Cref{fig:fp-2} shows the solutions of the 3D Fokker-Planck equation obtained by the baseline and TT-based matrix inversion methods, which are visually indistinguishable from the exact solution. It further illustrates the correctness of the proposed TT-based matrix inversion method.

\begin{figure}[!htb]
    \centering
    \includegraphics[width=1.00\linewidth]{./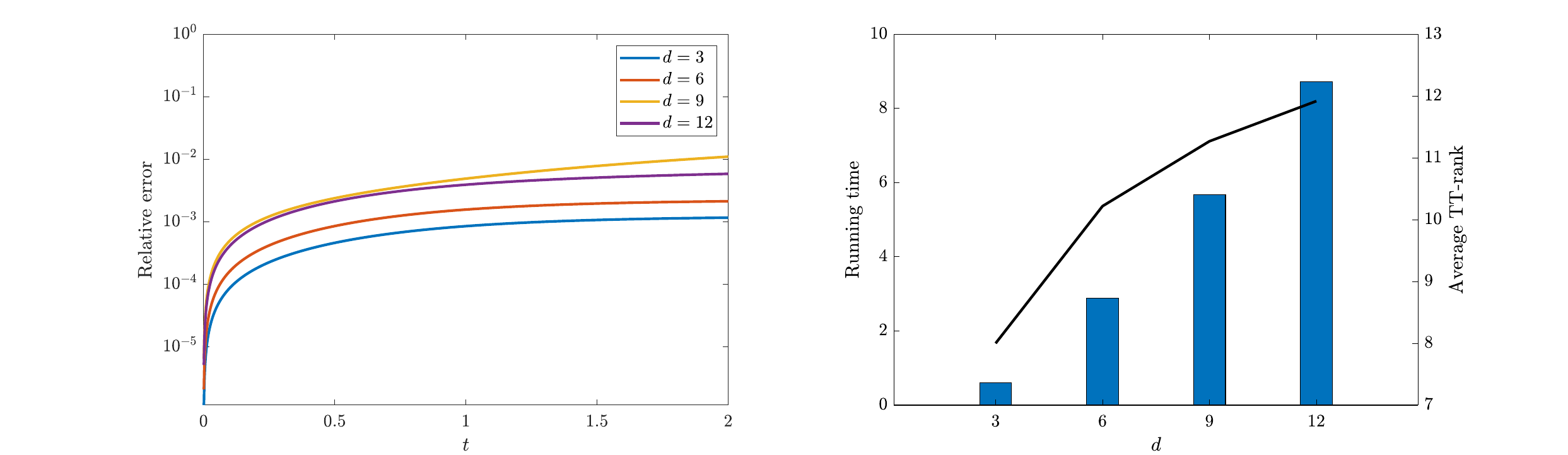}
    \caption{Numerical results of the TT-based matrix inversion method for solving the Fokker-Planck equation of different dimensions. Left: Relative errors. Right: Running times (unit: seconds) and averaged TT ranks.}
    \label{fig:fp-1}
\end{figure}

\begin{figure}[!htb]
    \centering
    \includegraphics[width=1.0\linewidth]{./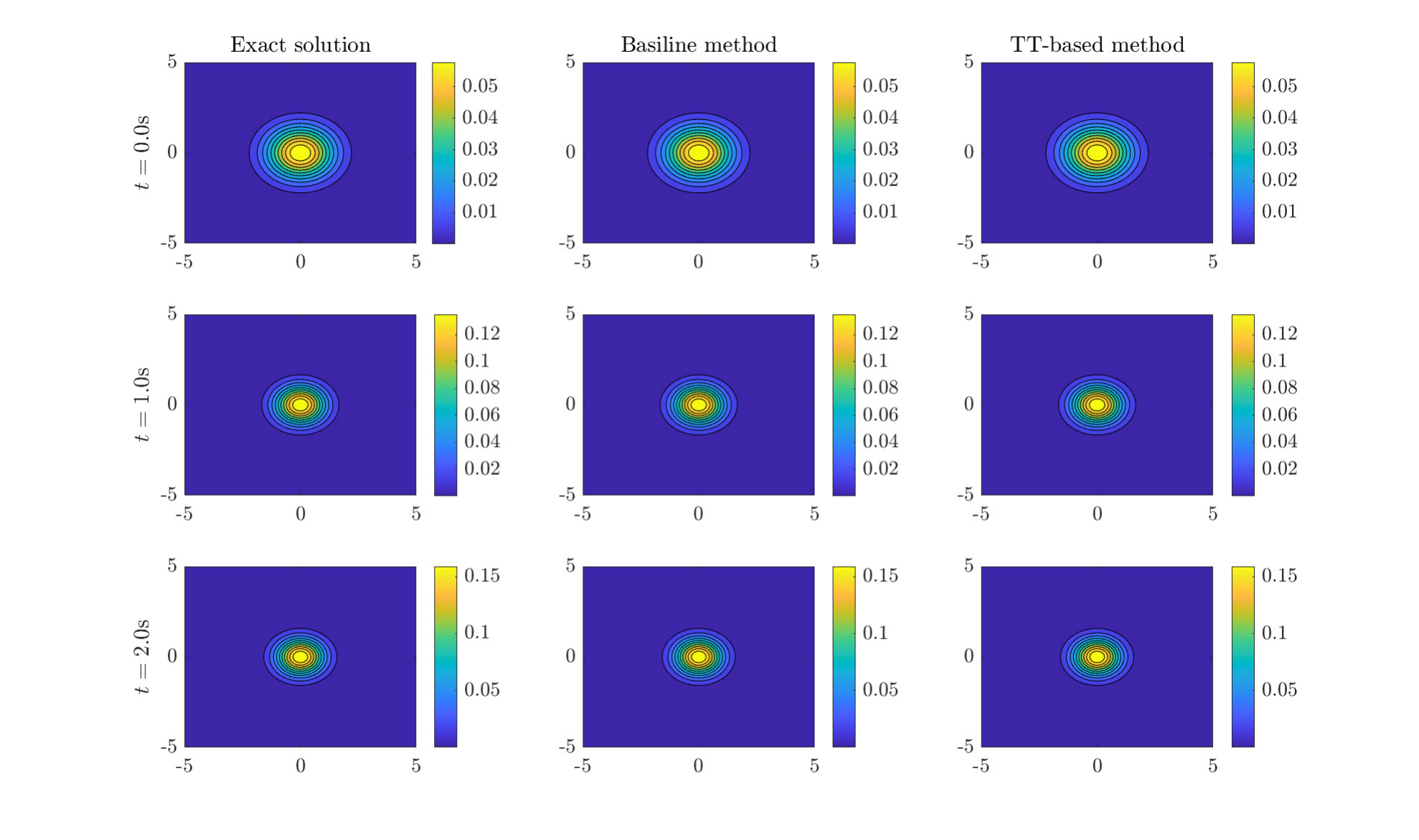}
    \caption{Solutions of the baseline and TT-based matrix inversion methods in the hyper-plane $(x_1,x_2,0)$ for the 3D Fokker-Planck equation.}
    \label{fig:fp-2}
\end{figure}
\section{Conclusions and future work}\label{sec:conclusion}
Although using TT-based iterative methods for solving large-scale linear systems related to numerical PDEs is a popular approach and often works well in practice, it needs a clear theoretical guarantee. In this paper, we conduct an in-depth analysis of the low-rank property for the inverse of a class of large-scale matrices in the TT format, which helps to take a step towards bridging the gap between theory and practical algorithms. Motivated by the analysis, we also propose a novel TT-based method for computing the inverse of matrices of the form \eqref{eq:coefficient matrix}. Our established theory answers the question of what conditions the structured matrix satisfies so that its inverse matrix is well-matched with the low-rank TT format. Based on the low-rank analysis, we prove that matrices discretized from the Poisson, Boltzmann, and Fokker-Planck equations are of the form \eqref{eq:coefficient matrix} and satisfy the given condition. Therefore, one can use the proposed TT-based matrix inversion method to numerically solve these PDEs with large degrees of freedom, which has also been validated by several numerical experiments. Possible future work may include: (1) applying our TT-based matrix inversion method to other problems such as inverse problems, preconditioners, etc; and (2) generalizing our low-rank analysis to more general structured matrices.\\

\noindent {\bf Acknowledgement.} C. Xiao has been supported by the Hunan Provincial Natural Science Foundation grant 2023JJ4005. K. Tang has been supported by the Hunan Provincial Natural Science Foundation grant 2024JJ6003.


\newpage
{\small
\begin{bibdiv}
\begin{biblist}

\bib{al2023randomized}{article}{
      author={Al~Daas, Hussam},
      author={Ballard, Grey},
      author={Cazeaux, Paul},
      author={Hallman, Eric},
      author={Mi{\k{e}}dlar, Agnieszka},
      author={Pasha, Mirjeta},
      author={Reid, Tim~W},
      author={Saibaba, Arvind~K},
       title={Randomized algorithms for rounding in the tensor-train format},
        date={2023},
     journal={SIAM Journal on Scientific Computing},
      volume={45},
      number={1},
       pages={A74\ndash A95},
}

\bib{bebendorf2003existence}{article}{
      author={Bebendorf, Mario},
      author={Hackbusch, Wolfgang},
       title={Existence of $\mathcal{H}$-matrix approximants to the inverse
  {FE-matrix} of elliptic operators with {$L^{\infty}$}-coefficients},
        date={2003},
     journal={Numerische Mathematik},
      volume={95},
       pages={1\ndash 28},
}

\bib{beckermann2017singular}{article}{
      author={Beckermann, Bernhard},
      author={Townsend, Alex},
       title={On the singular values of matrices with displacement structure},
        date={2017},
     journal={SIAM Journal on Matrix Analysis and Applications},
      volume={38},
      number={4},
       pages={1227\ndash 1248},
}

\bib{beckermann2019bounds}{article}{
      author={Beckermann, Bernhard},
      author={Townsend, Alex},
       title={Bounds on the singular values of matrices with displacement
  structure},
        date={2019},
     journal={SIAM Review},
      volume={61},
      number={2},
       pages={319\ndash 344},
}

\bib{benzi2002preconditioning}{article}{
      author={Benzi, Michele},
       title={Preconditioning techniques for large linear systems: a survey},
        date={2002},
     journal={Journal of Computational Physics},
      volume={182},
      number={2},
       pages={418\ndash 477},
}

\bib{beylkin2002numerical}{article}{
      author={Beylkin, Gregory},
      author={Mohlenkamp, Martin~J},
       title={Numerical operator calculus in higher dimensions},
        date={2002},
     journal={Proceedings of the National Academy of Sciences},
      volume={99},
      number={16},
       pages={10246\ndash 10251},
}

\bib{chen2023quantum}{article}{
      author={Chen, Jielun},
      author={Stoudenmire, EM},
      author={White, Steven~R},
      author={others},
       title={Quantum {Fourier} transform has small entanglement},
        date={2023},
     journal={PRX Quantum},
      volume={4},
      number={4},
       pages={040318},
}

\bib{chertkov2021solution}{article}{
      author={Chertkov, Andrei},
      author={Oseledets, Ivan},
       title={Solution of the {Fokker-Planck} equation by cross approximation
  method in the tensor train format},
        date={2021},
     journal={Frontiers in Artificial Intelligence},
      volume={4},
       pages={668215},
}

\bib{cichocki2016tensor}{article}{
      author={Cichocki, Andrzej},
      author={Lee, Namgil},
      author={Oseledets, Ivan},
      author={Phan, Anh-Huy},
      author={Zhao, Qibin},
      author={Mandic, Danilo~P},
      author={others},
       title={Tensor networks for dimensionality reduction and large-scale
  optimization: {Part} 1 low-rank tensor decompositions},
        date={2016},
     journal={Foundations and Trends{\textregistered} in Machine Learning},
      volume={9},
      number={4-5},
       pages={249\ndash 429},
}

\bib{cichocki2017tensor}{article}{
      author={Cichocki, Andrzej},
      author={Phan, Anh-Huy},
      author={Zhao, Qibin},
      author={Lee, Namgil},
      author={Oseledets, Ivan},
      author={Sugiyama, Masashi},
      author={Mandic, Danilo~P},
      author={others},
       title={Tensor networks for dimensionality reduction and large-scale
  optimization: {Part} 2 applications and future perspectives},
        date={2017},
     journal={Foundations and Trends{\textregistered} in Machine Learning},
      volume={9},
      number={6},
       pages={431\ndash 673},
}

\bib{daas2022parallel}{article}{
      author={Daas, Hussam~Al},
      author={Ballard, Grey},
      author={Benner, Peter},
       title={Parallel algorithms for tensor train arithmetic},
        date={2022},
     journal={SIAM Journal on Scientific Computing},
      volume={44},
      number={1},
       pages={C25\ndash C53},
}

\bib{al2022parallel}{inproceedings}{
      author={Daas, Hussam~Al},
      author={Ballard, Grey},
      author={Manning, Lawton},
       title={Parallel tensor train rounding using {Gram SVD}},
organization={IEEE},
        date={2022},
   booktitle={2022 ieee international parallel and distributed processing
  symposium (ipdps)},
       pages={930\ndash 940},
}

\bib{dahmen2016tensor}{article}{
      author={Dahmen, Wolfgang},
      author={DeVore, Ronald},
      author={Grasedyck, Lars},
      author={S{\"u}li, Endre},
       title={Tensor-sparsity of solutions to high-dimensional elliptic partial
  differential equations},
        date={2016},
     journal={Foundations of Computational Mathematics},
      volume={16},
      number={4},
       pages={813\ndash 874},
}

\bib{dolgov2012superfast}{article}{
      author={Dolgov, Sergey},
      author={Khoromskij, Boris},
      author={Savostyanov, Dmitry},
       title={Superfast {Fourier} transform using {QTT} approximation},
        date={2012},
     journal={Journal of Fourier Analysis and Applications},
      volume={18},
      number={5},
       pages={915\ndash 953},
}

\bib{espig2020iterative}{article}{
      author={Espig, Mike},
      author={Hackbusch, Wolfgang},
      author={Litvinenko, Alexander},
      author={Matthies, Hermann~G},
      author={Zander, Elmar},
       title={Iterative algorithms for the post-processing of high-dimensional
  data},
        date={2020},
     journal={Journal of Computational Physics},
      volume={410},
       pages={109396},
}

\bib{gantmacher2002oscillation}{book}{
      author={Gantmacher, Felix~R},
      author={Krein, Mark~G},
       title={Oscillation matrices and kernels and small vibrations of
  mechanical systems: revised edition},
   publisher={American Mathematical Society Rhode Island},
        date={2002},
}

\bib{gavrilyuk2005hierarchical}{article}{
      author={Gavrilyuk, Ivan~P},
      author={Hackbusch, Wolfgang},
      author={Khoromskij, Boris~N},
       title={Hierarchical tensor-product approximation to the inverse and
  related operators for high-dimensional elliptic problems},
        date={2005},
     journal={Computing},
      volume={74},
       pages={131\ndash 157},
}

\bib{golub2013matrix}{book}{
      author={Golub, Gene~H},
      author={Van~Loan, Charles~F},
       title={Matrix computations},
   publisher={JHU press},
        date={2013},
}

\bib{grasedyck2004existence}{article}{
      author={Grasedyck, Lars},
       title={Existence and computation of low {Kronecker-rank} approximations
  for large linear systems of tensor product structure},
        date={2004},
     journal={Computing},
      volume={72},
       pages={247\ndash 265},
}

\bib{hackbusch2006low}{article}{
      author={Hackbusch, Wolfgang},
      author={Khoromskij, Boris~N},
       title={Low-rank {Kronecker-product} approximation to multi-dimensional
  nonlocal operators. {Part I.} {Separable} approximation of multi-variate
  functions},
        date={2006},
     journal={Computing},
      volume={76},
       pages={177\ndash 202},
}

\bib{hackbusch2007tensor}{article}{
      author={Hackbusch, Wolfgang},
      author={Khoromskij, Boris~N},
       title={Tensor-product approximation to operators and functions in high
  dimensions},
        date={2007},
     journal={Journal of Complexity},
      volume={23},
      number={4-6},
       pages={697\ndash 714},
}

\bib{hackbusch2008tensor}{article}{
      author={Hackbusch, Wolfgang},
      author={Khoromskij, Boris~N},
       title={Tensor-product approximation to multidimensional integral
  operators and {Green's} functions},
        date={2008},
     journal={SIAM Journal on Matrix Analysis and Applications},
      volume={30},
      number={3},
       pages={1233\ndash 1253},
}

\bib{kailath1980linear}{book}{
      author={Kailath, Thomas},
       title={Linear systems},
   publisher={Prentice-Hall Englewood Cliffs, NJ},
        date={1980},
      volume={156},
}

\bib{kay2013fundamentals}{book}{
      author={Kay, Steven~M},
       title={Fundamentals of statistical signal processing: {Practical}
  algorithm development},
   publisher={Pearson Education},
        date={2013},
      volume={3},
}

\bib{kazeev2012low}{article}{
      author={Kazeev, Vladimir~A},
      author={Khoromskij, Boris~N},
       title={Low-rank explicit {QTT} representation of the {Laplace} operator
  and its inverse},
        date={2012},
     journal={SIAM Journal on Matrix Analysis and Applications},
      volume={33},
      number={3},
       pages={742\ndash 758},
}

\bib{khoromskij2009tensor}{article}{
      author={Khoromskij, Boris~N},
       title={Tensor-structured preconditioners and approximate inverse of
  elliptic operators in $\mathbb{R}^d$},
        date={2009},
     journal={Constructive Approximation},
      volume={30},
       pages={599\ndash 620},
}

\bib{lee2014fundamental}{article}{
      author={Lee, Namgil},
      author={Cichocki, Andrzej},
       title={Fundamental tensor operations for large-scale data analysis in
  tensor train formats},
        date={2014},
     journal={arXiv preprint arXiv:1405.7786},
}

\bib{li2024solving}{article}{
      author={Li, Zhengyi},
      author={Wang, Yanli},
      author={Liu, Hongsheng},
      author={Wang, Zidong},
      author={Dong, Bin},
       title={Solving the {Boltzmann} equation with a neural sparse
  representation},
        date={2024},
     journal={SIAM Journal on Scientific Computing},
      volume={46},
      number={2},
       pages={C186\ndash C215},
}

\bib{murphy2012machine}{book}{
      author={Murphy, Kevin~P},
       title={Machine learning: a probabilistic perspective},
   publisher={MIT press},
        date={2012},
}

\bib{oseledets2010approximation}{article}{
      author={Oseledets, Ivan},
       title={Approximation of $2^{d}\times2^{d}$ matrices using tensor
  decomposition},
        date={2010},
     journal={SIAM Journal on Matrix Analysis and Applications},
      volume={31},
      number={4},
       pages={2130\ndash 2145},
}

\bib{oseledets2011tensor}{article}{
      author={Oseledets, Ivan},
       title={Tensor-train decomposition},
        date={2011},
     journal={SIAM Journal on Scientific Computing},
      volume={33},
      number={5},
       pages={2295\ndash 2317},
}

\bib{oseledets2012solution}{article}{
      author={Oseledets, Ivan},
      author={Dolgov, Sergey},
       title={Solution of linear systems and matrix inversion in the
  {TT-format}},
        date={2012},
     journal={SIAM Journal on Scientific Computing},
      volume={34},
      number={5},
       pages={A2718\ndash A2739},
}

\bib{oseledets2019matlab}{misc}{
      author={Oseledets, Ivan},
      author={Dolgov, Sergey},
      author={Kazeev, Vladimir},
      author={Lebedeva, Olga},
      author={Mach, Thomas},
       title={{MATLAB TT-Toolbox}},
   publisher={Version},
        date={2019},
}

\bib{reddy2019introduction}{book}{
      author={Reddy, Junuthula~Narasimha},
       title={Introduction to the finite element method},
   publisher={McGraw-Hill Education},
        date={2019},
}

\bib{risken1996fokker}{book}{
      author={Risken, Hannes},
       title={{Fokker-Planck} equation},
   publisher={Springer},
        date={1996},
}

\bib{shi2023parallel}{article}{
      author={Shi, Tianyi},
      author={Ruth, Maximilian},
      author={Townsend, Alex},
       title={Parallel algorithms for computing the tensor-train
  decomposition},
        date={2023},
     journal={SIAM Journal on Scientific Computing},
      volume={45},
      number={3},
       pages={C101\ndash C130},
}

\bib{shi}{article}{
      author={Shi, Tianyi},
      author={Townsend, Alex},
       title={On the compressibility of tensors},
        date={2021},
     journal={SIAM Journal on Matrix Analysis and Applications},
      volume={42},
      number={1},
       pages={257\ndash 298},
}

\bib{smith1985numerical}{book}{
      author={Smith, Gordon~D},
       title={Numerical solution of partial differential equations: finite
  difference methods},
   publisher={Oxford University Press},
        date={1985},
}

\bib{sozykin2022ttopt}{article}{
      author={Sozykin, Konstantin},
      author={Chertkov, Andrei},
      author={Schutski, Roman},
      author={Phan, Anh-Huy},
      author={Cichocki, Andrzej~S},
      author={Oseledets, Ivan},
       title={{TTOpt}: {A} maximum volume quantized tensor train-based
  optimization and its application to reinforcement learning},
        date={2022},
     journal={Advances in Neural Information Processing Systems},
      volume={35},
       pages={26052\ndash 26065},
}

\bib{strikwerda2004finite}{book}{
      author={Strikwerda, John~C},
       title={Finite difference schemes and partial differential equations},
   publisher={SIAM},
        date={2004},
}

\bib{wang2024solving}{article}{
      author={Wang, Yifan},
      author={Lin, Zhongshuo},
      author={Liao, Yangfei},
      author={Liu, Haochen},
      author={Xie, Hehu},
       title={Solving high-dimensional partial differential equations using
  tensor neural network and a posteriori error estimators},
        date={2024},
     journal={Journal of Scientific Computing},
      volume={101},
      number={3},
       pages={1\ndash 29},
}

\bib{zienkiewicz2005finite}{book}{
      author={Zienkiewicz, Olek~C},
      author={Taylor, Robert~L},
      author={Zhu, Jian~Z},
       title={The finite element method: its basis and fundamentals},
   publisher={Elsevier},
        date={2005},
}

\end{biblist}
\end{bibdiv}

\bibliographystyle{amsalpha}
}

\end{document}